\theoremstyle{plain}
\newtheorem{thm}{Theorem}[section]
\newtheorem{lmm}[thm]{Lemma}
\newtheorem{cor}[thm]{Corollary}
\newtheorem{prop}[thm]{Proposition}
\theoremstyle{definition}
\newtheorem{dfn}[thm]{Definition}
\newtheorem{rmk}[thm]{Remark}
\def\al{\alpha}
\def\de{\delta}
\def\De{\Delta}
\def\ep{\epsilon}
\def\si{\sigma}
\def\Si{\Sigma}
\def\vp{\varphi}
\def\om{\omega}
\def\ka{\kappa}
\def\op{\operatorname}
\def\ov{\overline}
\def\sm{\setminus}
\def\Cal{\mathcal}
\def\Bbb{\mathbb}
\def\op{\operatorname}
\newenvironment{pf}{\begin{proof}}{\end{proof}}
\begin{document}
\title{Infinitary braid groups}

\author{Katsuya Eda}
\address{School of Science and Engineering, 
Waseda University, Tokyo 169, JAPAN}
\email{eda@waseda.jp}

\author{Takeshi Kaneto}
\address{Institute of Mathematics, 
University of Tsukuba, Tsukuba 305, JAPAN}

\thanks{The authors thank J. Morita, A. Tsuboi and K. Sakai for 
stimulating talks.}
\maketitle
This preprint was written before 1993 when the first author was 
in Uviversity of Tsukuba. Now, according to the new result by 
W. Herfort and W. Hojka the conclusion of Lemma 2.11 and hence 
that of Theorem 2.8 becomes ``cotorsion" instead of ``complete mod-U''. 

\section{Introduction and definitions}
An infinitary version of braid groups has been implicitly 
considered as a direct limit of $n$-braid groups (see Remark~\ref{rmk:limit}).
However, as an intuitive 
object, we can imagine a more complicated braid with infinitely many 
strings (see Figure ($\ast$)). 
%

In this paper we introduce an infinitary version of braid groups and 
study about fundamental properties of it especially in case the number of 
strings is countable. 
Our notation and notion are usual ones for braid groups and topology 
[\bf 2\rm , \bf 8\rm , \bf 11\rm ]. 
$\Bbb Z, \Bbb R$ and $\Bbb C$ are the set of integers, real numbers and 
complex numbers, respectively. The unit interval $\Bbb I$ is $[0,1].$ 
For a path $f$ in a space $X,$ i.e. a continuous map $f:\Bbb I\to X,$ $f^-$ 
is the path defined by $f^-(t)=f(1-t).$ 
For paths $f,g:\Bbb I\to X$ with $f(1) = g(0),$ $f\cdot g$ is the path 
defined by:  $f\cdot g(t) = f(2t)$ for $0\le t\le 1/2$ and $f\cdot g(t)=g(2t-1)$ 
for $1/2\le t\le 1.$ 
For paths $f$ and $g,$ 
$f\sim g$ if $f$ and $g$ are homotopic relative to $\{ 0, 1\}.$ 
A constant path is denoted by the point as usual. 

For a subset $X$ of $\Bbb C$ let $\Bbb C^X$ be the product space, regarding $X$ as an indexed set. For distinct $x,y\in X,$ let 
$\De _{xy} = \{u\in \Bbb C^X: u(x) = u(y)\}$ and 
$\De _X = \bigcup\{ \De _{xy}: x\neq y, x,y\in X\}.$ 
The subspace $\Bbb C^X\sm \De _X$ of $\Bbb C^X$ is denoted by $F_X.$ 
For a path $f$ in $F_X$ and $Y\subset X \subset \Bbb C,$ 
the restriction $f|_Y$ is a path in $F_Y,$ 
where $f|_Y(t)(y) = f(t)(y)$ for $y\in Y.$ 
Let $\Sigma _X$ be the group consisting of all permutations on the discrete 
set $X,$ which also acts freely on $F_X$ as coordinate transformations. 
For a path $f$ in $F_X$ and $\si \in \Si _X,$ $f^{\si }$ is the path in $F_X$ defined by: 
$f^\si (t)(x) = f(t)(\si (x))$ for $t\in \Bbb I, x\in X.$ 
\begin{dfn}\label{dfn:braid} For a subset $X$ of $\Bbb C,$ an $X$-braid 
$f:\Bbb I\to F_X$ is a path satisfying $f(0) = \op{id}_X$ 
and $f(1)\in \Si _X.$ 
In case $f(1)=\op{id}_X,$ $f$ is called a pure $X$-braid. 
$X$-braids $f$ and $g$ are equivalent, if $f\sim g.$ 
Let $[f]$ be the equivalence class containing $f$ with respect to $\sim $ 
for an $X$-braid $f.$ 
Let $A_X$ be the set of $X$-braids, 
$B_X = \{ [f] : f\in A_X\}$ and $P_X =\{ [f]: f\text{ is a pure }X\text{-braid }\}.$ 
For $f,g\in A_X,$ let $f\# g = f\cdot g^{f(1)}.$ 
\end{dfn}
In case $X$ is finite, the $X$-braid group is defined as the fundamental 
group of the orbit space $F_X/\Si _X$ in \cite{FoxNeuwirth:braid}. Then we can rewrite 
this definition in terms of paths in $F_X,$ for $F_X$ is a covering space 
of $F_X/\Si _X$ (see Remark 1.5 (2)). Analogously, we define the $X$-braid 
group $B_X$ for a finite or infinite set $X$ in the sense of the next 
proposition, where the subgroup $P_X$ of $B_X$ is 
just the fundamental group of a space $F_X.$ 
\begin{prop}
Let $[f][g] = [f\# g]$ for $f,g\in A_X.$ 
Then, this operation is well-defined and $B_X$ becomes a group. 
\end{prop}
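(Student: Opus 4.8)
The plan is to verify the two standard requirements for such a construction: that the operation $[f][g] = [f\#g]$ is well-defined on equivalence classes, and that the resulting operation on $B_X$ satisfies the group axioms. The key technical fact underlying everything is that $f \# g = f \cdot g^{f(1)}$ is a legitimate concatenation, i.e. that the endpoint of $f$ matches the startpoint of $g^{f(1)}$. Indeed, since $f(1) \in \Si_X$ and $g(0) = \op{id}_X$, the twisted path $g^{f(1)}$ starts at $\op{id}_X^{f(1)} = f(1)$, so $f \cdot g^{f(1)}$ is defined; and its endpoint is $g(1)^{f(1)}$, which lies in $\Si_X$ because $\Si_X$ is closed under the coordinate action and under composition. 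Thus $f \# g$ is again an $X$-braid, so the operation at least lands in the right set.

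First I would establish well-definedness. Suppose $f \sim f'$ and $g \sim g'$ via homotopies rel $\{0,1\}$. Two observations are needed. The endpoint-twist operation $(-)^\si$ is continuous and respects homotopy, so $g^{f(1)} \sim g'^{f(1)}$ whenever $g \sim g'$; moreover $f \sim f'$ forces $f(1) = f'(1)$ since homotopies are rel endpoints, so the twisting permutation is the same on both sides. Combining these with the standard fact that homotopy rel $\{0,1\}$ is a congruence for path concatenation, I get $f \cdot g^{f(1)} \sim f' \cdot g'^{f'(1)}$, hence $[f\#g] = [f'\#g']$. This shows $[f][g]$ depends only on $[f]$ and $[g]$.

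Next I would check the group axioms. For associativity, I would compute both $[([f]\#[g])]\#[h]$ and $[f]\#([g]\#[h])$ at the path level, using the identity $(u^\si)^\tau = u^{\tau\si}$ (or $\si\tau$, depending on the action convention) for the coordinate action, together with the endpoint relation $(f\#g)(1) = g(1)f(1)$; the two resulting paths differ only by the usual reparametrization of a triple concatenation, so they are homotopic rel $\{0,1\}$. The identity element is $[\op{id}_X]$, the class of the constant path at $\op{id}_X$: the computations $f \# \op{id}_X = f \cdot \op{id}_X$ and $\op{id}_X \# f = \op{id}_X \cdot f$ reduce to the standard homotopies $f \cdot c \sim f \sim c \cdot f$. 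For inverses, the candidate inverse of $[f]$ should be the class of $(f^-)^{f(1)^{-1}}$ (the reverse path, twisted back); one verifies that $f \# (f^-)^{f(1)^{-1}}$ is a loop at $\op{id}_X$ homotopic to the constant path by the standard $f \cdot f^- \sim f(0)$ argument, after checking the twist makes the endpoints agree.

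\textbf{The main obstacle} I anticipate is bookkeeping the interaction between the coordinate action $(-)^\si$ and concatenation, i.e. getting the composition convention for $\Si_X$ consistent so that the endpoint formula $(f\#g)(1) = g(1)f(1)$ comes out right and associativity genuinely closes up. The key algebraic identity to pin down is the compatibility $(f \cdot g)^\si = f^\si \cdot g^\si$ together with $(g^\si)^\tau = g^{\si\tau}$, since these are exactly what let the twisted concatenations in the associativity check be rearranged into a single reparametrized triple product. None of the homotopies themselves are hard — they are the textbook fundamental-group homotopies — so the real content is purely the algebra of the action, and the fact that $X$ may be infinite changes nothing here, as all paths and permutations are handled coordinatewise.
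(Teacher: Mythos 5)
Your proposal is correct and follows essentially the same route as the paper: the class of the constant path at $\op{id}_X$ serves as the identity, the twisted reverse $(f^-)^{f(1)^{-1}}$ serves as the inverse, and everything reduces to the standard path-homotopy arguments via the identities $(f\cdot g)^\si = f^\si\cdot g^\si$ and $(g^\si)^\tau = g^{\si\tau}$. You are in fact more thorough than the paper, which dismisses well-definedness as clear and does not spell out closure or associativity at all.
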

\begin{pf}
 Since the operation is clearly well-defined, we show the 
existence of identity and inverse. The equivalence class of the constant map 
$[\op{id}_X]$ is clearly the left identity. 
For $f\in A_X,$ let 
$\si = f(1)^{-1}\in \Si _X.$ 
Then, $(f^-)^\si \in A_X$ and $[f][(f^-)^\si ]=[f\cdot (f^-)^{\si f(1)}]
=[f\cdot f^-]=e$ and hence 
$[(f^-)^\si ]$ is the left inverse of $[f].$ 
\end{pf}

To simplify the notation, we adopt some convention in set theory, i.e. 
a non-negative integer $n$ is the set $\{ 0,\cdot \cdot \cdot ,n-1\} ,$ $0$ is 
the empty set and $\om$ is the set of all non-negative integers. 
Hence, $m < n$ if and only if $m\in n$ for $m,n \in \om .$ 
We denote the set of positive integers by $\om _+.$ 
Then, an $n$-braid in the usual sense is the same as defined as above and 
we can see that the $X$-braid group $B_X$ coincides with the usual $n$-braid 
group if $X = n.$ 
We identify $f\in A_X$ with the indexed family of paths 
$(p_x:x\in X)$ in $\Bbb C$ 
satisfying $p_x(0)=x$ 
for $x\in X,$ $p_x(t) \neq p_y(t)$ for $x\neq y\in X$ 
and $\{ p_x(1): x\in X\} = X,$ 
i.e. $p_x(t) = f(t)(x).$ 
Then, the restriction $f|_Y$ for $Y\subset X$ is the subfamily 
$(p_x:x\in Y)$ of $f=(p_x:x\in X).$  
we call $p_x$ the $x$-string. 
Since a homotopy in a product space corresponds to 
homotopies in all components, we get the next proposition, which 
is obtained just by rewriting the definition of the equivalence $\sim.$  
\begin{prop} For $X$-braids $f = (p_x:x\in X), g= (q_x:x\in X),$ 
$f$ and $g$ are equivalent if and only if there exists an indexed family of 
homotopies $(H_x:x\in X)$ such that 
\begin{enumerate}
\item[(1)] $H_x:\Bbb I\times \Bbb I\to \Bbb C$ is continuous; 
\item[(2)] $H_x(s,t)\neq H_y(s,t)$ for $s,t\in \Bbb I, x\neq y$; 
\item[(3)] $H_x(0,t)=x, H_x(1,t)=p_x(1)=q_x(1)$ for $ t\in \Bbb I$;  
\item[(4)] $H_x(s,0) = p_x(s)$ and $H_x(s,1) = q_x(s)$ for $s\in \Bbb I.$
\end{enumerate}
\end{prop}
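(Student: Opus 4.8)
The plan is to invoke the universal property of the product topology on $\Bbb C^X$, which is precisely the content of the remark that ``a homotopy in a product space corresponds to homotopies in all components.'' Recall that, by definition, $f\sim g$ means that $f$ and $g$ are homotopic relative to $\{0,1\}$ as paths in $F_X$; that is, there is a continuous map $H:\Bbb I\times\Bbb I\to F_X$ with $H(s,0)=f(s)$ and $H(s,1)=g(s)$ for $s\in\Bbb I$, and $H(0,t)=f(0)$ and $H(1,t)=f(1)$ for $t\in\Bbb I$, where the first coordinate is the path parameter and the second the homotopy parameter. The existence of such an $H$ forces $f(0)=g(0)$ and $f(1)=g(1)$; since $f,g$ are $X$-braids we have $f(0)=g(0)=\op{id}_X$ automatically, while $f(1)=g(1)$ is exactly the equality $p_x(1)=q_x(1)$ built into condition (3).

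For the forward direction, given such an $H$ I would set $H_x=\pi_x\circ H$, where $\pi_x:\Bbb C^X\to\Bbb C$ is the projection onto the $x$-th coordinate, so that $H_x(s,t)=H(s,t)(x)$. Continuity of each $H_x$, i.e.\ (1), is immediate since $\pi_x$ is continuous. Because $H$ takes values in $F_X=\Bbb C^X\sm\De_X$, every point $H(s,t)$ avoids each $\De_{xy}$, which says precisely $H_x(s,t)\neq H_y(s,t)$ for $x\neq y$, giving (2). Reading off the boundary data coordinatewise, using $f=(p_x)$, $g=(q_x)$ and the identification $\op{id}_X=(x:x\in X)$, turns $H(s,0)=f(s)$, $H(s,1)=g(s)$ into (4) and $H(0,t)=\op{id}_X$, $H(1,t)=f(1)$ into (3).

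For the converse I would assemble a family $(H_x:x\in X)$ satisfying (1)--(4) into a single map $H:\Bbb I\times\Bbb I\to\Bbb C^X$ by $H(s,t)(x)=H_x(s,t)$. Here lies the essential --- and only nonroutine --- point: by the universal property of the Tychonoff product topology, continuity of every component $H_x$ forces continuity of the assembled map $H$ into $\Bbb C^X$, and this holds for an arbitrary, possibly infinite, index set $X$, so no finiteness, uniformity, or diagonal argument is needed. Condition (2) then guarantees $H(s,t)\notin\De_{xy}$ for all $x\neq y$, whence $H$ actually lands in $F_X$, and conditions (3), (4) supply exactly the boundary data exhibiting $H$ as a homotopy relative to $\{0,1\}$ from $f$ to $g$, so $f\sim g$. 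The proof is thus a coordinatewise bookkeeping translation in both directions, with the continuity equivalence for products as the one step deserving care.
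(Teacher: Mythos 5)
Your proof is correct and is exactly the paper's approach: the paper gives no separate proof, simply remarking that ``a homotopy in a product space corresponds to homotopies in all components,'' so the proposition is ``obtained just by rewriting the definition of the equivalence $\sim$.'' Your write-up merely spells out this coordinatewise translation (projections one way, the universal property of the product topology the other way) in full detail.
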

Now, we can see that $X$-braids are \lq\lq strongly 
isotopic\rq\rq in the sense of Artin \cite{Artin:braid} if and only if 
they are equivalent, 
in case $X$ is finite. 
\begin{dfn}\label{dfn:equivalent} Two $X$-braids 
$f$ and $g$ are strongly equivalent, if there exists a continuous map 
$H:\Bbb C\times \Bbb I\times \Bbb I \to \Bbb C$ such that 
\begin{enumerate}
\item[(1)] $H_{s,t}:\Bbb C\to \Bbb C$ is a homeomorphism, where 
$H_{s,t}(\al )=H(\al ,s,t)$; 
\item[(2)] $H(\al ,s,0) = H(\al ,0,t) = H(\al ,1,t) = \al $ for 
$\al \in \Bbb C, s,t\in \Bbb I$;
\item[(3)] $H(f(s)(x),s,1) = g(s)(x)$ for $s\in \Bbb I, x\in X.$
\end{enumerate}
\end{dfn}
In case $X$ is finite, E. Artin \cite[Theorem 6]{Artin:braid} showed that 
$X$-braids $f,g$ are equivalent if and only if they are 
strongly equivalent. More precisely, he showed that for given 
$H_x\; (x\in X)$ in Proposition 1.3 there exists $H$ in Definition 1.4 
such that $H(f(s)(x),s,t) = H_x(s,t)$ and $H(\al ,s,t) = \al$ if $|\al|$ 
is large enough. 
\begin{rmk}\label{rmk:limit}

\noindent
(1) If we induce the box product topology to the space $\Bbb C^\om ,$ 
the situation changes as follows. The path connected component of 
$\op{id}_\om$ consists of all $u\in F_\om$ such that 
$u(n) = n$ for all but finite $n\in \om .$ 
Therefore, the group obtained by the procedure in Definition~\ref{dfn:braid} and 
Proposition 1.2 in this case is the sum $\bigcup \{ B_n: n\in \om \}$ 
under the natural identification of $B_n$ as a subgroup of $B_{n+1}$ 
and the homomorphic image of the canonical 
map to $\Si _\om$ 
consists of permutations of finite support. 

\noindent
(2) One might suspect why we do not treat with the quotient space of 
$F_X$ using $\Si _X$ as in \cite{FoxNeuwirth:braid}. 
If we take such a quotient of $A_X$ in Definition~\ref{dfn:braid} in case that $X$ is 
infinite, the quotient space does not become Hausdorff nor the quotient map 
is a regular cover. These are the reasons. 
\end{rmk}
\section{Basic results of $B_X$ and $\om$-braids}
To show some results of $B_X$ the next proposition is necessary. For 
its proof we debt A. Tsuboi. 
\begin{prop} For any $X\subset \Bbb C,$ the space $F_X$ 
is path connected.
\end{prop}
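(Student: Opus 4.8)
The plan is to unwind the product topology completely. A point of $F_X$ is nothing but an injection $u\colon X\to\Bbb C$, and by the universal property of the product a path $\Bbb I\to F_X$ is the same datum as a family $(p_x:x\in X)$ of paths in $\Bbb C$ that is \emph{collision free}, i.e.\ $p_x(t)\neq p_y(t)$ for all $t\in\Bbb I$ and all $x\neq y$. Writing $\Gamma_x=\{(t,p_x(t)):t\in\Bbb I\}\subseteq\Bbb I\times\Bbb C$, the collision-free condition says exactly that the graphs $\Gamma_x$ are pairwise disjoint arcs in the three-dimensional space $\Bbb I\times\Bbb C$. Since arcs have codimension two there, one expects ample room to separate them; the crucial point is that the product topology requires \emph{no} continuity of $p_x$ in the index $x$, so the strings may be prescribed completely independently.

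To join two given points $u,v\in F_X$ I would route both through a common auxiliary configuration $w$ stacked on the imaginary axis. Choose a purely set-theoretic (not continuous) injection $\sigma\colon X\to\Bbb R$, which exists because $|X|\le|\Bbb C|=|\Bbb R|$, and set $w(x)=i\,\sigma(x)$; this lies in $F_X$ since $\sigma$ is injective. For $u$ I would use the explicit homotopy
\[
p_x(t)=\big[(1-t)\op{Re}u(x)+t(1-t)\sigma(x)\big]+i\big[(1-t)\op{Im}u(x)+t\,\sigma(x)\big],
\]
a polynomial in $t$, hence continuous in each coordinate, with $p_x(0)=u(x)$ and $p_x(1)=i\,\sigma(x)=w(x)$. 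A direct computation shows that an interior collision $p_x(t)=p_y(t)$ forces a single algebraic relation, namely that $\sigma(x)-\sigma(y)$ equal one prescribed real number $c_{xy}$ determined by $u(x)-u(y)$; the degenerate cases $\op{Re}u(x)=\op{Re}u(y)$ and $\op{Im}u(x)=\op{Im}u(y)$ admit no interior collision at all. The identical analysis applied to $v$ produces numbers $c'_{xy}$.

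It therefore suffices to choose $\sigma$ so that $\sigma(x)\neq\sigma(y)$ and $\sigma(x)-\sigma(y)\notin\{c_{xy},c'_{xy}\}$ for all $x\neq y$. I would do this by transfinite recursion along a well-ordering of $X$ of order type $|X|$: when $\sigma(x)$ is to be defined, each earlier index imposes at most three forbidden reals, so the already-assigned values forbid a set of cardinality less than $|\Bbb R|$, and a legitimate value of $\sigma(x)$ survives. With such a $\sigma$ both homotopies are collision free, yielding a path from $u$ to $w$ and a path from $v$ to $w$, whose concatenation joins $u$ to $v$. In the countable case this is an ordinary recursion avoiding countably many reals.

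I expect the main obstacle to be precisely the collision avoidance when $X$ is uncountable, where the finite-dimensional tricks break down: there is no continuous injective ``height function'' on a planar set, so one cannot separate the strings by a generic rotation or a single linear functional as one does for finite $X$. The resolution is the two features working in tandem, that the product topology frees the key $\sigma$ from any continuity requirement and that a genericity argument lets $\sigma$ dodge the crossing conditions. The one computation that must be carried out with care is verifying that the displayed homotopy really does collapse every potential collision to the single relation $\sigma(x)-\sigma(y)=c_{xy}$.
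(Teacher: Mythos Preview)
Your argument is correct and follows essentially the same strategy as the paper: construct the strings by transfinite recursion along a well-ordering of $X$ of minimal order type, using that $|X|\le 2^{\aleph_0}$ so at each stage fewer than continuum-many collision constraints are imposed while continuum-many choices remain. The paper works directly from $\op{id}_X$ to $u$ with piecewise-linear ``dogleg'' strings $Q_\mu R_\mu S_\mu$ (choosing the midpoint $R_\mu\in\Bbb C\times\{1/2\}$ on a generic plane through the endpoints), whereas you route through an auxiliary configuration on the imaginary axis via an explicit quadratic parametrization and reduce the avoidance to the single real parameter $\sigma(x)$; but the counting mechanism and the underlying idea are the same.
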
 
\begin{proof} Let $u \in F_X.$ 
We shall choose $(\al ,1/2)\in \Bbb I\times \Bbb I$ for each $x\in X$ so that the dogleg segments 
from $(x,0)$ to $(u(x),1)$ via $(\al ,1/2)$ do not intersect for distinct 
$x's.$ 
First, well-order $X$ so that $X = \{ x_\mu : \mu <\ka \}$ and 
the cardinality of $\{ \nu :\nu <\mu \}$ is less than $2^{\aleph _0}$ for each 
$\mu <\ka .$ 
Let $Q_\mu = (x_\mu ,0)$ and $S_\mu = (u(x_\mu ),1)$ for $\mu <\ka .$ 
Suppose that we have gotten 
$R_\nu \in \Bbb C\times \{ 1/2 \}\; (\nu <\mu )$ so that 
$Q_\nu R_\nu S_\nu $ do not intersect mutually for distinct $\nu$'s. 
Since there are $2^{\aleph _0}$-many planes 
which contain $Q_\mu $ and $S_\mu ,$ there exists a plane which 
contains $Q_\mu $ and $S_\mu $ but does neither contain any 
$Q_\nu $ nor $S_\nu $ ($\nu <\mu ).$ 
Take such a plane. Then, the intersection of the plane and 
$Q_\nu R_\nu S_\nu $ consists of at most two points for 
each $\nu <\mu .$ 
Therefore, there are less 
than $2^{\aleph _0}$-many points on the plane which are on some 
constructed $Q_\nu R_\nu S_\nu .$ 
Tracing $(\al ,1/2)$'s in the plane, we get $2^{\aleph _0}$-many 
distinct dogleg segments in the plane which connect $x_\mu$ and $u(x_\mu ).$ 
Therefore, we can choose $R_\mu $ so that 
$Q_\mu R_\mu S_\mu $ does not intersect with 
$Q_\nu R_\nu S_\nu $ for $\nu < \mu .$  
For $t\in \Bbb I,$ define $f(t)(x_\mu )$ so that 
$(f(t)(x_\mu ), t)$ is on $Q_\mu R_\mu S_\mu .$ 
Then, $f:\Bbb I\to F_X$ is a path from $\op{id}_X$ to $u.$ 
\end{proof}
\begin{thm} If $X$ and $Y$ have the same cardinality for 
$X,Y\subset \Bbb C,$ $B_X$ and $B_Y$ are isomorphic. 
\end{thm}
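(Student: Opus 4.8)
The plan is to reduce the whole statement to a change of base point, by transporting everything through the reindexing homeomorphism attached to a bijection $\vp : X\to Y.$ Define $\Phi :\Bbb C^X\to \Bbb C^Y$ by $\Phi(u)=u\circ \vp ^{-1},$ i.e. $\Phi(u)(y)=u(\vp ^{-1}(y)).$ Since $\Phi$ merely relabels coordinates by $\vp ,$ it is a homeomorphism of product spaces, and as it carries injective maps to injective maps and $\De _X$ onto $\De _Y,$ it restricts to a homeomorphism $\Phi :F_X\to F_Y.$ A direct computation gives $\Phi(u^\si )=\Phi(u)^{\vp \si \vp ^{-1}}$ for $\si \in \Si _X,$ so $\Phi$ intertwines the $\Si _X$-action with the $\Si _Y$-action through the group isomorphism $\si \mapsto \vp \si \vp ^{-1}.$

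First I would track where the distinguished data go. Put $u_0=\Phi(\op{id}_X),$ so that $u_0(y)=\vp ^{-1}(y);$ this is an injective map $Y\to \Bbb C$ with image $X,$ and hence in general $u_0$ lies in a \emph{different} $\Si _Y$-orbit than $\op{id}_Y$ (the orbit of $u_0$ is the set of bijections of $Y$ onto $X,$ while that of $\op{id}_Y$ is $\Si _Y$ itself). Composition $f\mapsto \Phi\circ f$ sends an $X$-braid, i.e. a path from $\op{id}_X$ to $\Si _X,$ to a path in $F_Y$ from $u_0$ to the orbit $\Si _Y\cdot u_0.$ Using $\Phi\circ g^\si =(\Phi\circ g)^{\vp \si \vp ^{-1}}$ one checks $\Phi\circ (f\# g)=(\Phi\circ f)\cdot (\Phi\circ g)^{\vp f(1)\vp ^{-1}},$ so on homotopy classes this is an isomorphism of $B_X$ onto the analogous group $B_X'$ of homotopy classes of paths from $u_0$ to its orbit, with product inherited from the $\Si _Y$-action.

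It remains to identify $B_X'$ with $B_Y,$ which is the same construction based at $\op{id}_Y$ rather than $u_0.$ Since $F_Y$ is path connected by Proposition 2.1, I would choose a path $\ga$ in $F_Y$ from $\op{id}_Y$ to $u_0.$ For a path $h$ from $u_0$ to $u_0^\tau$ (with $\tau \in \Si _Y$) set $\Psi(h)=\ga \cdot h\cdot (\ga ^\tau )^-,$ a path from $\op{id}_Y$ to $\tau \in \Si _Y,$ hence a $Y$-braid with the same permutation part $\tau .$ The twist $\ga ^\tau$ is forced precisely because $u_0^\tau$ and $\tau$ lie in different orbits, and it makes the endpoints match. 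Using $(\ga ^\tau )^-\cdot \ga ^\tau \sim u_0^\tau$ and $(\ga ^\si )^\rho =\ga ^{\si \rho },$ one verifies $\Psi(h_1)\# \Psi(h_2)\sim \Psi(h_1\# h_2),$ so $\Psi$ is a homomorphism; it is bijective on homotopy classes with inverse built from $\ga ^-,$ giving $B_X'\cong B_Y.$ Composing the two isomorphisms yields $B_X\cong B_Y.$

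The routine parts are the verification that $\Phi$ is a homeomorphism and the bookkeeping of the $\#$-product. The hard part will be the change of base point: because $u_0=\Phi(\op{id}_X)$ need not lie in the orbit of $\op{id}_Y$ when $X\neq Y$ as subsets of $\Bbb C,$ one cannot simply conjugate $h$ by $\ga .$ The correct device is to conjugate the terminal endpoint by the \emph{shifted} path $\ga ^\tau ,$ carrying the permutation part along throughout; path connectedness of $F_Y$ is exactly what guarantees such a $\ga$ exists and bridges the two orbits.
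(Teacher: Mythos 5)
Your proof is correct and is essentially the paper's own argument: your composite map $\Psi(\Phi\circ f)=\ga\cdot(\Phi\circ f)\cdot(\ga^{\vp\si_f\vp^{-1}})^-$ coincides with the paper's map $\Phi(f)=h\cdot\vp(f)\cdot(h^-)^{\vp\si_f\vp^{-1}}$ (take $\ga=h$ and use $(h^-)^\tau=(h^\tau)^-$), and it is built from the same two ingredients, namely the relabeling homeomorphism induced by a bijection $\vp:X\to Y$ and a connecting path supplied by Proposition 2.1. The only difference is organizational: you factor the isomorphism through an intermediate group of paths based at $u_0=\vp^{-1},$ whereas the paper defines the composite in one step and verifies injectivity, surjectivity, and the homomorphism law directly.
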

\begin{proof} Take a bijective function $\vp :X\to Y.$ 
Then, $\vp$ induces a homeomorphism between $F_X$ and $F_Y.$ 
Let $\si _f = f(1)$ for $f\in A_X \cup A_Y.$ 
For $f\in A_X$ and $g\in A_Y,$ 
define paths $\vp (f)$ in $F_Y$ and $\vp ^{-1}(g)$ in $F_Y$ by: 
$\vp (f)(t)(y) = f(t)(\vp ^{-1}(y))$ and 
$\vp ^{-1}(g)(t)(x) = g(t)(\vp (x))$ for $t\in \Bbb I, x\in X, y\in Y.$ 
Then $\vp (f)$ is a path from $\vp ^{-1}$ to $\si _f\vp ^{-1}.$ 
By Proposition 2.1, there exists a path $h$ from $\op{id}_Y$ 
to $\vp ^{-1}\in F_Y.$ Define $\Phi :A_X\to A_Y$ by: 
$\Phi (f) = h\cdot \vp (f)\cdot (h^-)^{\vp \si _f\vp ^{-1}}.$ 
It is easy to see $\Phi (f) \sim \Phi (g)$ for $f\sim g.$ 
On the other hand, $\vp ^{-1}(h\cdot \vp (f)\cdot (h^-)^{\vp \si _f\vp ^{-1}}) 
= \vp ^{-1}(h)\cdot f\cdot \vp ^{-1}((h^-)^{\vp \si _f\vp ^{-1}} 
= \vp ^{-1}(h)\cdot f\cdot (\vp ^{-1}(h^-))^{\si _f}.$ 
Therefore, $\vp ^{-1}(h^-)\cdot \vp ^{-1}(\Phi (f))\cdot (\vp ^{-1}(h))^{\si _f} \in A_X$ is equivalent to $f,$ which shows that 
$\Phi (f)\sim \Phi (g)$ implies $f\sim g .$ 
Therefore, $\Phi$ induces an injection from  $B_X$ to $B_Y.$ 
The surjectivity of the induced map can be proved similarly as above. 
The remaining thing to show is $\Phi (f\# g) \sim \Phi (f)\# \Phi (g).$ 
\begin{eqnarray*}
\Phi (f)\# \Phi (g) 
& = h\!\cdot\! \vp (f)\!\cdot\! (h^-)^{\vp \si _f\vp ^{-1}}\!\!\!\cdot\! 
  h^{\vp \si _f\vp ^{-1}}\!\!\!\cdot\! \vp (g)^{\vp \si _f\vp ^{-1}}\!\!\!
\cdot\! 
  (h^-)^{\vp \si _g\vp ^{-1}\vp \si _f\vp ^{-1}} \\
& \sim h\!\cdot\! \vp (f)\!\cdot\! \vp (g)^{\vp \si _f\vp ^{-1}}\!\cdot\! 
  (h^-)^{\vp \si _g\si _f\vp ^{-1}} \\
& = h\!\cdot\! \vp (f\# g)\!\cdot\! (h^-)^{\vp \si _g\si _f\vp ^{-1}} \\
& = h\!\cdot\! \vp (f\# g)\!\cdot\! (h^-)^{\vp \si _{f\# g}\vp ^{-1}} \\
& = \Phi (f\# g).
\end{eqnarray*}
\end{proof}
Next, we prove an exact sequence which shows a relationship between braid 
groups and permutation groups. 
\begin{prop}
The following exact sequence holds for any subset $X$ of $\Bbb C:$ 

$0\;\; \to\;\; P_X\;\;\to \;\; B_X\;\; \to \;\;\Si _X\;\;\to\;\;  0 \; .$
\end{prop}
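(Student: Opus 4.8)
The plan is to exhibit the three maps and verify exactness at each of the three interior positions. The map $P_X \to B_X$ is the inclusion induced by the observation that a pure $X$-braid is in particular an $X$-braid; since $[f][g]=[f\# g]$ and for pure braids $f(1)=\op{id}_X$ so $f\# g = f\cdot g$, this inclusion is a group homomorphism. The map $B_X \to \Si_X$ is the ``underlying permutation'' map $\pi([f]) = f(1)$. I would first check this is well-defined: if $f\sim g$ then the homotopy is relative to $\{0,1\}$, so $f(1)=g(1)$, hence $\pi$ depends only on $[f]$. That it is a homomorphism follows directly from the definition $f\# g = f\cdot g^{f(1)}$, since $(f\# g)(1) = g^{f(1)}(1) = g(1)\,f(1)$ reading the action of $\Si_X$ on the right, matching the group operation in $\Si_X$ under the convention already used in Proposition 1.2.

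Next I would establish surjectivity of $\pi$, which is exactly where Proposition 2.1 does the work. Given any $\si \in \Si_X$, I need an $X$-braid $f$ with $f(1)=\si$, i.e. a path in $F_X$ from $\op{id}_X$ to $\si$ (note $\si \in \Si_X \subset F_X$ since a permutation of $X\subset\Bbb C$ is a point of $F_X$ only when it is realized as an element of $\Bbb C^X$ with distinct coordinates whose coordinate-set equals $X$; this is precisely the condition $f(1)\in\Si_X$ in Definition~\ref{dfn:braid}). Path-connectedness of $F_X$, proved in Proposition 2.1, guarantees such a path exists, so $\pi$ is onto.

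Exactness at $P_X$, i.e. injectivity of the inclusion, is immediate since $P_X$ is literally a subset of $B_X$. The substantive point is exactness at $B_X$: I must show $\ker\pi$ equals the image of $P_X$. The inclusion $\op{im}(P_X)\subseteq \ker\pi$ is clear, since a pure braid $f$ has $f(1)=\op{id}_X=e$ in $\Si_X$. For the reverse inclusion, suppose $[f]\in\ker\pi$, so $f(1)=\op{id}_X$; then $f$ is by definition a pure $X$-braid, hence $[f]\in P_X$. Thus $\ker\pi = P_X$ on the nose, and the map $P_X\to B_X$ is injective with image exactly $\ker\pi$.

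The main obstacle, such as it is, lies not in any single deep step but in pinning down the algebraic conventions so that $\pi$ genuinely is a homomorphism into $\Si_X$ and not its opposite group. Since $f\#g = f\cdot g^{f(1)}$ twists the second braid by $f(1)$, the induced permutation composes in a specific order, and I would take care to state whether $\Si_X$ acts on the left or right so that $(f\# g)(1) = f(1)\cdot g(1)$ holds as written; the proof of Proposition 1.2 already fixes this convention (the inverse there is $[(f^-)^\si]$ with $\si = f(1)^{-1}$), and I would simply follow it. Everything else is a routine check that well-definedness and the homomorphism property survive passage to homotopy classes, which Proposition 1.3 licenses by reducing equivalence to coordinatewise homotopies rel endpoints.
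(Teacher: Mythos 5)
Your proposal is correct and follows essentially the same route as the paper: both define the canonical map $[f]\mapsto f(1)$, identify its kernel with $P_X$, and obtain surjectivity from the path-connectedness of $F_X$ (Proposition 2.1). The only difference is that you spell out the well-definedness, the homomorphism property (including the order-of-composition convention), and the kernel computation, which the paper dismisses as clear.
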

\begin{proof} Let $h:B_X\to \Si _X$ be the canonical homomorphism, i.e. 
$\phantom{abcd}$ $h([f])(x)=f(1)(x)$ for $f\in A_X, x\in X.$ 
Then, $\op{Ker}(h) = P_X$ clearly. For $\si\in \Si _X$ there exists a path $f$ 
in $F_X$ from $\op{id}_X$ to $\si$ by Proposition 2.1. 
Then, $f\in A_X$ and $h([f]) = \si .$ 
\end{proof}
It is well-known that 
$B_n$ are torsion-free for $n\in \om $ [\bf 6\rm , \bf 3\rm ]. 
We shall show that $B_\om$ is torsion-free. It follows from the next theorem, 
which shows a fundamental property of $\om$-braids. 
\begin{thm}\label{thm:reduction} Let $f,g$ be $\om$-braids. Then, 
$f\sim g$ if and only 
if $f|_n\sim g|_n$ for the restrictions $f|_n, g|_n \; (n\in \om ).$ 
\end{thm}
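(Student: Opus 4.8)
The forward implication is immediate: if $(H_x:x\in\om)$ is a family of homotopies witnessing $f\sim g$ as in Proposition 1.3, then for each $n$ the subfamily $(H_x:x<n)$ witnesses $f|_n\sim g|_n$. So the entire content lies in the converse, and for this I would work directly with the tower of restriction maps rather than with the group structure, which does not interact well with $f\mapsto f|_n$ when $f(1)$ fails to preserve $n$. Note first that the hypothesis forces $f(1)|_n=g(1)|_n$ for every $n$, hence $f(1)=g(1)=:\si$; thus all the homotopies below share the same behaviour at the two ends of each path.

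Regard $F_n$ as the ordered configuration space of $n$ points in $\Bbb C$ and $F_\om$ as the inverse limit $\varprojlim_n F_n$ along the restriction maps $p_n:F_{n+1}\to F_n$ (this is exactly the product topology read off coordinatewise). I would use two classical facts: each $p_n$ is a locally trivial fibration, the Fadell--Neuwirth fibration, whose fibre is $\Bbb C$ minus a finite set, and each $F_n$ is aspherical, so in particular $\pi_2(F_n)=0$. The plan is to replace the given homotopies $H^{(n)}:f|_n\sim g|_n$ by a \emph{strictly} compatible family $K^{(n)}:f|_n\sim g|_n$, meaning $p_n\circ K^{(n+1)}=K^{(n)}$, and then to assemble the $K^{(n)}$ coordinatewise.

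The inductive step is where the work lies. Suppose $K^{(n)}$ has been constructed. Viewing homotopies rel endpoints as maps $\Bbb I\times\Bbb I\to F_n$, both $K^{(n)}$ and $p_n\circ H^{(n+1)}=H^{(n+1)}|_n$ are such maps, and they agree on all of $\partial(\Bbb I\times\Bbb I)$: the two $t$-edges carry $f|_n$ and $g|_n$, while the two $s$-edges carry the constants $\op{id}_n$ and $\si|_n$. Since $\pi_2(F_n)=0$, any two maps of the square into $F_n$ agreeing on the boundary are homotopic rel $\partial(\Bbb I\times\Bbb I)$; let $G:(\Bbb I\times\Bbb I)\times\Bbb I\to F_n$ be such a homotopy from $H^{(n+1)}|_n$ to $K^{(n)}$, stationary on the boundary square. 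Lifting $G$ through $p_n$ with initial lift $H^{(n+1)}$ produces $\widetilde G$ with $\widetilde G_0=H^{(n+1)}$ and $p_n\widetilde G=G$; setting $K^{(n+1)}=\widetilde G_1$ gives $p_n\circ K^{(n+1)}=K^{(n)}$, as required.

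Finally, the assembled map $H(s,t)(x):=K^{(m)}(s,t)(x)$ for any $m>x$ is well defined by compatibility, continuous into $\Bbb C^\om$ because each coordinate is, and lands in $F_\om$ because any two coordinates are already separated at some finite stage; its restrictions to the two $t$-edges are $f$ and $g$, so $f\sim g$. The main obstacle is precisely the inductive step, and within it the need to preserve the boundary data while making the homotopies compatible: the lift $\widetilde G$ must be taken stationary on $\partial(\Bbb I\times\Bbb I)$, so I would invoke the relative homotopy lifting property for the cofibred pair $(\Bbb I\times\Bbb I,\partial(\Bbb I\times\Bbb I))$, which guarantees that $K^{(n+1)}$ again equals $f|_{n+1}$ and $g|_{n+1}$ on the two $t$-edges and is constant on the two $s$-edges. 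The asphericity of $F_n$ and this relative lifting are the only non-formal inputs.
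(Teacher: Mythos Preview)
Your argument is correct, and it takes a genuinely different route from the paper's. The paper first reduces to the pure case by replacing $f,g$ with $f\cdot g^-$ and $\op{id}_\om$, and then proves Lemma~2.7 by an explicit geometric construction: using Artin's ambient-isotopy theorem it straightens one strand at a time, obtaining at stage $m$ a homotopy $H^m$ of $\Bbb C$ that already fixes the coordinates $0,\dots,m-1$, so that the telescoped map $H(s,t)$ is eventually constant in each coordinate and hence continuous. Your proof, by contrast, works directly with $f$ and $g$ and is purely homotopy-theoretic: you use the Fadell--Neuwirth fibration $p_n:F_{n+1}\to F_n$, the vanishing of $\pi_2(F_n)$ (from asphericity of ordered configuration spaces), and the relative homotopy lifting property for the cofibred pair $(\Bbb I^2,\partial\Bbb I^2)$ to align the given homotopies into a strictly compatible tower, then pass to the inverse limit. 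Your approach is cleaner and avoids the hands-on work of Lemma~2.6, at the cost of importing two classical facts (the fibration property and $\pi_2(F_n)=0$) as black boxes; the paper's approach stays closer to Artin's original braid machinery and is more self-contained in that sense. Both strategies ultimately build a compatible family of square maps in the tower $(F_n)$ and take the limit; the difference is entirely in how compatibility is achieved---geometric straightening versus obstruction-theoretic lifting.
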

\begin{cor}
The $\om$-braid group $B_\om $ is torsion-free.
\end{cor}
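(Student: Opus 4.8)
The plan is to assume that $[f]$ has finite order, say $[f]^k=e$ with $k\ge 1$, and to deduce $[f]=e$ by combining the torsion-freeness of the finite braid groups $B_n$ with the reduction principle of Theorem~\ref{thm:reduction}. Writing $\si=f(1)\in\Si_\om$, the canonical homomorphism of Proposition~2.3 gives $\si^k=\op{id}$, so every $\si$-orbit in $\om$ is finite. The first goal is to show that in fact $\si=\op{id}$, i.e. that $f$ is pure; only then can Theorem~\ref{thm:reduction} be applied, since that theorem compares $f$ with $\op{id}_\om$ through restrictions to initial segments $n$, and $f\sim\op{id}_\om$ already forces the endpoint $f(1)$ to be trivial.

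The key step, and the one I expect to be the main obstacle, is that one cannot restrict directly to an initial segment $n$: the permutation $\si$ need not preserve any nontrivial $n=\{0,\dots,n-1\}$ (a finite-order $\si$ can straddle every nonempty initial segment), so $f|_n$ need not be an $n$-braid and there is nothing in a torsion-free $B_n$ to compare it to. The remedy is to restrict instead to the finite $\si$-invariant sets that are always available, namely the orbits. Fix $x\in\om$ and let $Y$ be its $\si$-orbit, a finite set with $\si(Y)=Y$; then $f|_Y$ is a $Y$-braid. Because $\si(Y)=Y$, restriction to $Y$ is compatible with the $\Si_\om$-action appearing in $g^{\si}$, which yields the identity of paths $(f^{\#k})|_Y=(f|_Y)^{\#k}$. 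Restricting a homotopy $f^{\#k}\sim\op{id}_\om$ index-by-index (Proposition~1.3) gives $(f|_Y)^{\#k}\sim\op{id}_Y$, i.e. $[f|_Y]^k=e$ in $B_Y$. Since $Y$ is finite, $B_Y\cong B_{|Y|}$ by Theorem~2.2 is torsion-free, so $[f|_Y]=e$; in particular its image under the canonical map is trivial, forcing $\si|_Y=\op{id}_Y$ and hence $\si(x)=x$. As $x$ was arbitrary, $\si=\op{id}_\om$.

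With $f$ now pure, every initial segment $n$ is $\si$-invariant, so the argument of the previous paragraph applies verbatim with $Y$ replaced by $n$: one gets $(f|_n)^{\#k}=(f^{\#k})|_n\sim\op{id}_n$, whence $[f|_n]^k=e$ in the torsion-free group $B_n$ and therefore $f|_n\sim\op{id}_n$ for every $n\in\om$. Theorem~\ref{thm:reduction} then yields $f\sim\op{id}_\om$, i.e. $[f]=e$, completing the proof that $B_\om$ is torsion-free. The only inputs are Theorem~2.2, Proposition~2.3, the reduction Theorem~\ref{thm:reduction}, and torsion-freeness of the finite $B_n$; the whole novelty is the passage through orbits to trivialize $\si$ before invoking the initial-segment reduction.
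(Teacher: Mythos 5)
Your proof is correct and is essentially the same as the paper's: the paper also first restricts $f$ to the finite $\si$-orbits (the partition $\{E_n\}$ on which $\si$ acts cyclically) and uses torsion-freeness of the finite braid groups to conclude $\si=\op{id}$, and then applies Theorem~\ref{thm:reduction} to the restrictions $f|_n$. The only difference is that you spell out the identity $(f^{\# k})|_Y=(f|_Y)^{\# k}$ and the restriction of homotopies explicitly, which the paper leaves implicit.
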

\begin{proof}
Let $[f]^m = e$ for $f\in A_\om , m\neq 0$ and $\si = f(1).$ Then, $\si ^m = e$ in $\Si _\om $ and hence there exists a partition 
$\{ E_n: n\in \om\}$ of $\om$ such that 
the cardinalities of $E_n$ are divisors of $m$ and each restriction of $\si$ 
to $E_n$ is a cyclic permutation on $E_n.$ The restriction $f|_{E_n}$ is an $E_n$-braid and $[f|_{E_n}] ^m = e.$ Therefore, $[f|_{E_n}] = e$ and hence 
$\si |_{E_n}=e$ for each $n.$ Now, we have shown that $f$ is a pure 
$\om$-braid. 

Since $[f|_n]^m = e$ by the assumption, 
$f|_n \sim \op{id}_n$ for every $n\in \om .$ Hence, $[f] = e$ by 
Theorem~\ref{thm:reduction}. 
\end{proof}

To show Theorem~\ref{thm:reduction}, some lemmas are necessary. 
\begin{lmm}
Let $f\in A_n$ be a pure $n$-braid for $n\in \om$ such that 
$[f] = e$ and $f(s)(i) = i$ for $i\in k \le n, s\in \Bbb I.$ 
Then, there exists a continuous map 
$H:\Bbb C\times \Bbb I\times \Bbb I \to \Bbb C$ such that 
\begin{eqnarray*}
(1) & H_{s,t}:\Bbb C\to \Bbb C \text{ is a homeomorphism, where }
H_{s,t}(\al )=H(\al ,s,t) ; \\ 
(2) & H(\al ,s,0)= \al \text{ and }H(j,s,1)=f(s)(j) \text{ for } j\in n;\\
(3) & H(\al ,0,t)= H(\al ,1,t) =\al ;\\
(4) & H(i,s,t)=i \text{ for }i\in k.
\end{eqnarray*}
\end{lmm}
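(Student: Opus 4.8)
The plan is to realize the trivial braid $f$ by an ambient isotopy via E.~Artin's theorem (the precise form quoted after Definition 1.4), after first arranging a null-homotopy of $f$ that keeps the first $k$ strings rigidly fixed. Write $F_n^{(k)}=\{u\in F_n: u(i)=i\text{ for }i<k\}$ for the subspace of configurations whose first $k$ points sit at $0,1,\dots ,k-1$. The hypothesis $f(s)(i)=i$ ($i<k$) says precisely that $f$ is a loop in $F_n^{(k)}$ based at $\op{id}_n$, and $[f]=e$ says this loop is null-homotopic in the larger space $F_n$.

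First I would upgrade the null-homotopy from $F_n$ to $F_n^{(k)}$. Projection onto the first $k$ coordinates is the Fadell--Neuwirth fibration $F_n^{(k)}\hookrightarrow F_n\to F_k$, whose fiber over $\op{id}_k$ is exactly $F_n^{(k)}$. Since planar configuration spaces are aspherical, $\pi_2(F_k)=0$, so the long exact homotopy sequence shows that the inclusion-induced map $\pi_1(F_n^{(k)})\to \pi_1(F_n)$ is injective. As $[f]$ dies in $\pi_1(F_n)$, it is already trivial in $\pi_1(F_n^{(k)})$; hence $f\sim \op{id}_n$ through a homotopy lying entirely in $F_n^{(k)}$.

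Reading off coordinates, this homotopy is a family $(H_x:x\in n)$ as in Proposition 1.3 witnessing $\op{id}_n\sim f$, with $H_x(s,0)=x$, $H_x(s,1)=f(s)(x)$, and $H_x(0,t)=H_x(1,t)=x$; crucially, staying inside $F_n^{(k)}$ forces $H_i(s,t)=i$ for all $i<k$. I would then apply Artin's theorem to the pair $(\op{id}_n,f)$: it yields $H:\Bbb C\times \Bbb I\times \Bbb I\to \Bbb C$ as in Definition 1.4 with $H(x,s,t)=H_x(s,t)$ for each $x\in n$. Conditions (1) and (3) of the Lemma are Definition 1.4(1) together with the $H(\al,0,t)=H(\al,1,t)=\al$ clause of 1.4(2); the first half of (2) is $H(\al,s,0)=\al$ from 1.4(2), while $H(j,s,1)=H_j(s,1)=f(s)(j)$ gives its second half; and (4) follows from $H(i,s,t)=H_i(s,t)=i$ for $i<k$.

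The only genuine obstacle is condition (4): Artin's $H$ faithfully tracks whatever family $(H_x)$ it is fed, so a careless null-homotopy would let the fixed strings drift and produce $H(i,s,t)=H_i(s,t)\neq i$. Everything therefore hinges on producing the null-homotopy inside $F_n^{(k)}$, which is what the fibration-plus-asphericity step accomplishes; once that is secured, the remaining verifications are a routine transcription of Artin's theorem.
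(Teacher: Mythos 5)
Your proposal is correct, but it reaches the crucial intermediate step --- a null-homotopy of $f$ that keeps the first $k$ strings pointwise fixed throughout --- by a genuinely different route than the paper. The paper first reduces to the case $k=n-1$ (the general case follows by iterating this special case, one string at a time) and then builds the restricted null-homotopy by hand: it takes Artin's ambient isotopy $H'$ realizing $f\sim \op{id}_n$, which is the identity on all $\al$ with $|\al|$ large, and uses the $H'_{\,\cdot\,,1}$-image of the segment $[n-1,m]$ (for $m$ large) to drag the $(n\!-\!1)$-string into the half-plane $\{\al\in\Bbb C:{\mathbf Re}(\al)>n-1-1/2\}$, where it can be straightened by a linear homotopy without meeting the strings $0,\dots ,n-2$; the resulting strand homotopies $H_i$ fix the first $n-1$ strings by construction. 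You instead obtain the restricted null-homotopy abstractly: the loop $f$ lies in the fiber $F_n^{(k)}$ of the Fadell--Neuwirth fibration $F_n\to F_k$, and asphericity of $F_k$ (so $\pi_2(F_k)=0$) together with the long exact sequence makes $\pi_1(F_n^{(k)})\to\pi_1(F_n)$ injective, whence $[f]=e$ already holds inside $F_n^{(k)}$. From that point on the two arguments coincide: both feed the strand homotopies, constant on the first strings, into the strengthened form of Artin's Theorem 6 quoted after Definition 1.4, and your reading of conditions (1)--(4) out of that theorem is accurate --- in particular your observation that (4) is the only genuine obstacle is exactly the point of the paper's proof as well. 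The trade-off is clear: the paper's construction is elementary and self-contained modulo Artin's theorem, which the paper already relies on throughout, at the cost of an explicit geometric argument and the reduction to $k=n-1$; yours needs no reduction, handles all $k$ fixed strings at once, and isolates the conceptual content (injectivity on $\pi_1$ of the fiber inclusion), but it imports the Fadell--Neuwirth bundle structure and the asphericity of planar configuration spaces --- classical results contemporaneous with the Fox--Neuwirth paper cited here, but facts that this paper neither cites nor uses anywhere else.
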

\begin{proof}
It suffices to show the case $k=n\! -\! 1,$ since the conclusion can be obtained by 
repeated use of such a special case. 
By the assumption and \cite[Theorem 6]{Artin:braid}, 
$\op{id}_n$ and $f$ are strongly equivalent, that is, 
there exists a continuous map 
$H':\Bbb C\times \Bbb I\times \Bbb I \to \Bbb C$ 
satisfying: 
\begin{eqnarray*}
(1) & H'_{s,t}:\Bbb C \to \Bbb C \text{ is a homeomorphism }; \\
(2) & H'(\al ,s,0)= \al \text{ and }H'(j,s,1)=f(s)(j) 
\text{ for } j\in n;\\
(3) & H'(\al ,0,t)= H'(\al ,1,t)= \al ; \\
(4) & H'(m,s,t) = m \text{ for large enough }m.
\end{eqnarray*}
For $\ep > 0,$ define $\psi _\ep :\Bbb I\to [n-1,m]\; (\subset \Bbb C)$ as follows:  
$$\psi _\ep (s) = \left\{
\begin{array}{ll}
(1-\frac {s}{\ep })(n-1) + \frac {s}{\ep }m,& \text{for } 0\le s \le \ep ;\\
m, &\qquad \qquad \text{for } \ep < s \le 1-\ep ;\\
\frac {s-1+\ep }{\ep }(n-1) + (1-\frac {s-1+\ep }{\ep })m, &\quad \text{for }
1-\ep <s\le 1 
\end{array}
\right.
$$
Let $g_\ep (s) = H'(\psi _\ep (s),s,1).$ 
Since $H'(\al ,0,1) = H'(\al ,1,1) = \al$ for any $n\! -\! 1\le \al \le m$ and 
$H'(m,s,1) = m$ for any $s\in \Bbb I,$ 
there exists $\ep >0$ such that the real part of $g_\ep (s)$ is greater 
than $n-1-1/2.$ Fix such an $\ep .$ 
Using $H'((1-t)(n-1)+t\psi _\ep (s),s,1),$ we get a homotopy from 
$f$ to an $n$-braid whose $(n\!\! -\!\! 1)$-string $g_\ep $ varies in 
$\{ \al\in \Bbb C: {\mathbf Re}(\al )>n-1-1/2 \}$ keeping the $i$-strings 
($i\in n\!\! -\!\! 1$) fixed. Then, we can easily make 
the $(n\!\! -\!\! 1)$-string 
straight leaving the $i$-strings ($i\in n\!\! -\!\! 1$) fixed. 
To perform the works altogether, let $H''_i(s,t) = i$ for $i\in n\!\! -\!\! 1$ 
and 
$$
H''_{n-1}(s,t) = 
\left\{ 
\begin{array}{ll}
H'((1-2t)(n-1)+2t\psi _\ep (s),s,1) & \text{for }0\le t\le 1/2;\\
       (2-2t)g_\ep +(2t-1)(n-1) & \text{for }1/2< t\le 1.
\end{array}
\right. 
$$
To get the desired $H,$ let $H_i(s,t) = H''_i(s,1-t).$ 
Then, $H_i\; (i\in n)$ satisfy the following: 
\begin{eqnarray*}
(1) && H_i:\Bbb I\times \Bbb I \to \Bbb C \text{ is continuous and } 
H_i(s,t)\neq H_j(s,t) \text{ for }i\neq j; \\
(2) && H_i(s,0)= i \text{ and }H_i(s,1)=f(s)(i) \text{ for } i\in n;\\
(3) && H_i(0,t)= H_i(1,t) =i \text{ for }i\in n;\\
(4) && H_i(s,t)=i \text{ for }i\in n-1.
\end{eqnarray*}
Again by \cite[Theorem 6]{Artin:braid}, we can extend $H_i$'s to the desired $H.$  
\end{proof}
\begin{lmm}\label{lmm:spetial} Let $f$ be a pure $\om$-braid. 
Then, $f\sim \op{id}_\om $ if and only if $f|_n\sim \op{id}_n$ for every $n\in \om .$ 
\end{lmm}
\begin{proof} It suffices to show the one direction. Suppose that 
$f|_n\sim \op{id}_n$ for every $n\in \om .$ 
By induction we define pure $\om$-braids $f_m$ and a continuous map 
$H^m:\Bbb C\times\Bbb I\times\Bbb I\to \Bbb C$ as the following: 
\begin{eqnarray*}
(1)& f_0 = f, f_m(s)(n) = H^m(n,s,0), f_{m+1}(s)(n) = H^m(n,s,1); \\
(2)& H^m_{s,t}\text{ is an autohomeomorphism on }\Bbb C, \\
&\text{ where }H^m_{s,t}(\al ) = H^m(\al ,s,t,); \\
(3)& H^m(\al ,0,t) = H^m(\al ,1,t) = \al ; \\ 
(4)& H^m(k,s,t) = k \text{ for }k \in  m,  H^m(k,s,1) = k \text{ for }
k \in m+1.
\end{eqnarray*}
Using Lemma\ref{lmm:spetial}and the facts $[f|_m] = e\; (m\in \om )$ and 
$f_m(s)(k) = k \; (k \in m),$ we can define these above. 
Define $H:\Bbb I\times \Bbb I\to F_\om$ as follows: 
$$\left\{
\begin{array}{lll}
H(s,1)(n) &=& n; \\
H(s,t)(n) &=& H^m(n,s,2^{m+1}(t - \Si _{i=1}^m1/2^i)), \\ 
&&\text{ if }\; \Si _{i=1}^m1/2^i \le t \le \Si _{i=1}^{m+1}1/2^i. 
\end{array}
\right.
$$
Then, $H(0,t) = H(1,t) = \op{id}_\om ,$ $H(s,0) = f(s)$ and 
$H(s,1) = \op{id}_\om$ hold. The continuity of $H$ follows from the fact 
that $H^m(n,s,t) = n$ if $n<m$ and $\Si _{i=1}^m 1/2^i \le t.$ 
Therefore, $[f] = e.$ 
\end{proof}

{\it Proof of\/} Theorem~\ref{thm:reduction} 
It suffices to show the one direction. Suppose that 
$f|_n\sim g|_n$ for the restrictions $f|_n, g|_n \; (n\in \om ).$ 
Then, $(f\cdot g^-)|_n = f|_n\cdot g^-|_n \sim \op{id}_n$ for every $n\in \om .$ 
By Lemma 2.7, $f\cdot g^- \sim \op{id}_\om .$ 
Let $\si = f(1),$ then $g(1) = \si $ and $[g]^{-1} = [(g^-)^{\si ^{-1}}].$ 
Hence, $[f][g]^{-1} = [f\cdot (g^-)^{\si ^{-1}\si}] = [f\cdot g^-] 
= [\op{id}_\om ] = e,$ i.e. $f\sim g.$ 
\smallskip
\smallskip

It is known that the abelianization of $B_n,$ i.e. $B_n/(B_n)',$ is isomorphic 
to $\Bbb Z.$ 
In case of $B_\om $ some difference should occur, since $\Si _X$ 
coincides with its commutator subgroup $(\Si _X)'$ for an infinite $X.$ 
We don't know whether $B_\om = (B_\om )'$ or not. 
But, the abelianization of $B_\om$ is not isomorphic to $\Bbb Z,$ as we shall see in the following. 

Some definition is necessary to state the next theorem. 
An abelian group 
$A$ is called complete modulo the Ulm subgroup (abbreviated by 
\lq\lq complete mod-$U$\rq\rq ), if for any $x_n \in  A (0 < n\in \om )$ with $n!\mid  x_{n+1}-x_n$ there 
exists $x\in A$ such that $n!\mid  x-x_n$ for all $0<n \in \om.$ 
In other words, $A/U(A)$ is complete \cite{Fuchs:group},
where $U(A) = \bigcap _{n\in \om _+}n!A.$ 
Since any homomorphic image of a complete mod-$U$ abelian group is also 
complete mod-$U,$ 
a complete mod-$U$ abelian group has no summand isomorphic to $\Bbb Z.$ 
(See \cite{E:free} for further information about complete mod-$U$ groups.) 
This kind of group is related to the first integral singular homology groups 
of wild spaces \cite{E:union,E:free}. 
\begin{thm}
The abelianization of $B_\om$ is complete modulo the Ulm subgroup.  
\end{thm}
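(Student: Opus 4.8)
The plan is to construct explicit elements of $B_\om$ whose images under the abelianization map $B_\om \to B_\om/(B_\om)'$ realize the completeness condition. The key observation is that the generators $\sigma_i$ of the $n$-braid groups persist in $B_\om$: for each $i \in \om$ let $\sigma_i \in A_\om$ be the $\om$-braid that swaps the $i$-th and $(i{+}1)$-th strings by a half-twist (in the plane, near the segment between $i$ and $i{+}1$) while keeping all other strings fixed. These are supported on disjoint finite index-ranges as $i$ varies, so infinitely many of them can be composed simultaneously inside $F_\om$. I would first record that in the abelianization of each $B_n$, all the standard generators $\sigma_0,\dots,\sigma_{n-2}$ map to a single generator of $\Bbb Z$; this is the classical computation of $B_n^{\mathrm{ab}}\cong\Bbb Z$ that the paper cites.

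\textbf{Building the completing element.} Given $x_k \in B_\om^{\mathrm{ab}}$ with $k!\mid x_{k+1}-x_k$, I want to produce $x$ with $k!\mid x-x_k$ for all $k$. The idea is that the differences $x_{k+1}-x_k$ are divisible by $k!$, so each can be written as $k!\,y_k$ for some $y_k$; the sought-after $x$ should be a formal sum $x_1 + \sum_{k\ge 1} k!\,y_k$, which makes sense precisely because $B_\om$ can absorb an infinite product of braid-words living on disjoint string-blocks. Concretely, I would choose for each $k$ a braid word $w_k$ representing $k!\,y_k$ but \emph{supported on a finite block of strings lying entirely above index $N_k$}, where $N_1 < N_2 < \cdots$; conjugating/translating an $n$-braid so its strings occupy $\{N_k, N_k{+}1,\dots\}$ is legitimate since $B_X\cong B_Y$ for $|X|=|Y|$ (Theorem~2.2) and strings on disjoint blocks commute. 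Then $W = \prod_{k\ge 1} w_k$ is a genuine element of $A_\om$: at each parameter $t$ only finitely many blocks have moved, and the concatenation is performed on a shrinking sequence of time-subintervals $[\Si_{i<k}2^{-i}, \Si_{i\le k}2^{-i}]$ exactly as in the proof of Lemma~\ref{lmm:spetial}, so continuity into $F_\om$ holds by the same ``$H^m(n,s,t)=n$ for $n<m$'' mechanism.

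\textbf{Verifying the divisibility.} With $x := x_1 + [W]$ (image in the abelianization), I would check $k!\mid x - x_k$ as follows. Telescoping, $x_k = x_1 + \sum_{j<k} j!\,y_j$, and the finite partial product $\prod_{j<k} w_j$ abelianizes to exactly $\sum_{j<k} j!\,y_j$; the \emph{tail} $\prod_{j\ge k} w_j$ abelianizes to $\sum_{j\ge k} j!\,y_j$, every term of which is divisible by $k!$ because $j!$ is a multiple of $k!$ for $j\ge k$. Hence $x - x_k$ is $k!$ times an element, as required. The one subtle point is that ``divisible by $k!$'' must be read in $B_\om^{\mathrm{ab}}$, so I need each $w_j$ with $j\ge k$ to be a genuine $k!$-th power in $B_\om$ (not merely in the abelianization); I arrange this at construction time by taking $w_j$ to be literally the $j!$-th power of a single-block braid representing $y_j$, which is an honest element of $A_\om$ and whose infinite product is still well-defined by the disjoint-support argument above.

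\textbf{The main obstacle.} The delicate step is ensuring the infinite concatenation $W$ really defines a continuous path into $F_\om$ and not merely into $\Bbb C^\om$: the product topology makes this plausible, but one must confirm that the endpoint $W(1)$ is a permutation and that distinct strings never collide at the limiting time $t=1$. This is handled exactly by Lemma~\ref{lmm:spetial}'s device — forcing $w_j$ to fix every string of index below its block, so that on any coordinate $n$ only finitely many factors act and the path is eventually constant in that coordinate near $t=1$. I would also need to double-check that reindexing an arbitrary finite braid onto a high block of strings preserves its abelianization class, which follows from Theorem~2.2 together with the fact that the abelianization map is natural under the block inclusions $B_n \hookrightarrow B_\om$.
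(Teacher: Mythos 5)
There is a genuine gap, and it sits at the very first step of your construction. You write each difference as $x_{k+1}-x_k=k!\,y_k$ and then ``choose for each $k$ a braid word $w_k$ representing $k!\,y_k$ but supported on a finite block of strings lying entirely above index $N_k$.'' But $y_k$ is an arbitrary element of the abelianization, i.e.\ the image of an arbitrary $\om$-braid, in which \emph{all} infinitely many strings may move; nothing guarantees that its class modulo $(B_\om )'$ has a finitely supported representative. In fact the images in $B_\om /(B_\om )'$ of all finitely supported braids form the cyclic subgroup generated by the common image of the elementary half-twists $\si _i$ (these are all conjugate to one another already in the finite braid groups, hence in $B_\om$), so your construction can only complete sequences whose difference quotients $y_k$ lie in that cyclic subgroup --- not arbitrary sequences, which is what completeness mod-$U$ requires. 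Nor can you repair this by re-indexing an infinite braid onto a ``high'' block via Theorem 2.2: the isomorphism $B_\om \cong B_E$ followed by extension-by-identity back into $B_\om$ is not given by conjugation in $B_\om$ (if $g(1)$ moves every index, no conjugate of $[g]$ can be supported on a proper subset $E$, since conjugation preserves the fixed-point structure of the underlying permutation), so there is no reason it preserves classes modulo $(B_\om )'$. The disjoint-support mechanism, which is the heart of your proof, is thus unavailable exactly for the elements you must handle; your continuity analysis and the ``literal $k!$-th power'' device for the tails are fine in themselves, but they never get to act on a general input.

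The paper's proof is organized precisely to avoid this obstacle: instead of moving a class onto fresh strings, it moves a class \emph{close to the base point}. By Lemma 2.9 (perfectness of $\Si _\om$) the abelianization of $B_\om$ is a homomorphic image of $P_\om =\pi _1(F_\om ,\op{id}_\om )$. By Lemma 2.10, for every pure $\om$-braid $f$ and every neighborhood $O$ of $\op{id}_\om$ there is a pure $\om$-braid $g$ with $[g]_a=[f]_a$ and $\op{Im}(g)\subset O$; this only requires $g$ to fix the finitely many coordinates on which $O$ depends, which is achievable because $B_{n+1}/(B_{n+1})'\cong \Bbb Z$ is generated by a class represented by a braid fixing the first $n$ strings. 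The completing element is then produced by the abstract Lemma 2.11 (from \cite{E:union}), whose proof concatenates infinitely many loops \emph{shrinking to the base point}; smallness, unlike disjointness of support, can be arranged for every class, and it is what makes the infinite concatenation continuous and the tail-divisibility argument go through. If you want to salvage your write-up, the fix is to replace ``supported above $N_k$'' by ``contained in the $k$-th basic neighborhood of $\op{id}_\om$,'' which is exactly the paper's Lemma 2.10, and then your infinite concatenation becomes the one carried out in Lemma 2.11.
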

It is well-known that $\Si _X = (\Si _X)'$ for infinite $X$ 
\cite[p. 306]{Scott:group}. 
Therefore, we get the following by Proposition 2.3. 
\begin{lmm}
For an infinite $X\subset \Bbb C,$ $B_X = P_X(B_X)'.$ 
\end{lmm}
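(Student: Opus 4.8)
The plan is to read this off directly from the short exact sequence $0\to P_X\to B_X\to\Si_X\to 0$ of Proposition 2.3, using only the quoted fact that $\Si_X=(\Si_X)'$ for infinite $X$. Let $h:B_X\to\Si_X$ be the canonical homomorphism, which by that proposition is surjective with $\op{Ker}(h)=P_X$.

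First I would use that the image of the derived subgroup under a surjective homomorphism is the derived subgroup of the target. Since $h$ is onto, $h((B_X)')=(h(B_X))'=(\Si_X)'$, and because $X$ is infinite the cited result gives $(\Si_X)'=\Si_X$. Hence $h((B_X)')=\Si_X$; in other words the restriction of $h$ to $(B_X)'$ is already surjective.

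It then remains only to peel off a pure braid. Given any $[f]\in B_X$, surjectivity of $h$ on $(B_X)'$ provides $c\in(B_X)'$ with $h(c)=h([f])$. Then $[f]c^{-1}\in\op{Ker}(h)=P_X$, so $[f]=\bigl([f]c^{-1}\bigr)c\in P_X(B_X)'$. This yields $B_X\subseteq P_X(B_X)'$; the opposite inclusion is trivial since both $P_X$ and $(B_X)'$ are subgroups of $B_X$, whence $B_X=P_X(B_X)'$.

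The only genuine input is the group-theoretic fact $\Si_X=(\Si_X)'$, which is cited, so I expect no real obstacle; everything else is a formal manipulation of the exact sequence. The one point to keep honest is the identity $h((B_X)')=(\Si_X)'$, which genuinely needs the surjectivity of $h$ (an arbitrary homomorphism only gives the inclusion $h((B_X)')\subseteq(\Si_X)'$).
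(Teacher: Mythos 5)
Your proof is correct and follows exactly the route the paper intends: the paper's own ``proof'' is just the one-line observation that the lemma follows from the exact sequence of Proposition 2.3 together with $\Si_X=(\Si_X)'$, and your argument is precisely the spelled-out version of that (surjectivity of $h$ gives $h((B_X)')=(\Si_X)'=\Si_X$, then peel off the kernel element). No discrepancy; you have simply supplied the details the paper leaves implicit.
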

For $f\in A_X ,$ let $[f]_a$ be the element of the 
abelianization of $B_X$ corresponding to $f,$ i.e. the map $[f]\to [f]_a$ 
is the canonical homomorphism from $B_X$ to $B_X /(B_X)'.$ 
We shall treat with the cases $X = n$ for $n\in \om$ and $X = \om .$ 
\begin{lmm}
For any pure $\om$-braid $f$ and neighborhood $O$ of $\op{id}_\om $ in 
$F_\om ,$ there 
exists a pure $\om$-braid $g$ such that 
$[g]_a = [f]_a$ and $\op{Im}(g)\subset O.$ 
\end{lmm}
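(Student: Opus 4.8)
The plan is to represent the class $[f]_a$ not by shrinking $f$ itself, but by producing a genuinely different pure braid that keeps the finitely many constrained strings pinned at their base points and performs all of its braiding among strings of high index. First I would reduce to a basic neighborhood: since $F_\om$ carries the product topology, I may assume $O=\{u\in F_\om:|u(i)-i|<\vep\text{ for }i<n\}$ for some $n\in\om$ and $\vep>0$. A pure $\om$-braid $g=(q_x:x\in\om)$ then satisfies $\op{Im}(g)\subset O$ as soon as its strings $q_0,\dots,q_{n-1}$ stay within $\vep$ of $0,\dots,n-1$ throughout; in particular this holds whenever $g$ leaves the first $n$ strings fixed at their base points. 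Note that a genuine homotopy cannot help here, since it preserves the braid class exactly; the braid $g$ we seek must differ from $f$ by an element of $(B_\om)'$ while being geometrically small.

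Next I would introduce a transplantation (shift) operation $\Phi_n$ sending a pure $\om$-braid $f=(p_x:x\in\om)$ to the pure $\om$-braid $\Phi_n(f)$ whose strings of index $<n$ are held constant at their base points and whose strings of index $\ge n$ reproduce the braiding of $f$ under the reindexing $x\mapsto x+n$ (matching base points by the translation $\al\mapsto\al+n$, and, if necessary, a harmless ambient isotopy keeping the configuration collision-free). By construction $\Phi_n(f)$ fixes the strings $0,\dots,n-1$, so $\op{Im}(\Phi_n(f))\subset O$ automatically, and $\Phi_n$ respects $\#$ up to equivalence. The whole lemma therefore rests on the single identity $[\Phi_n(f)]_a=[f]_a$, after which one simply sets $g=\Phi_n(f)$.

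To establish this identity I would exploit that $[\,\cdot\,]_a$ is invariant under conjugation (it is the abelianization) together with the transitivity of $\Si_\om$ on pairs. Writing $A_{ij}$ for the elementary pure braid in which string $i$ circles string $j$ once, one has $[A_{ij}]_a=[A_{i+n,\,j+n}]_a$: some $\tau\in\Si_\om$ carries $\{i,j\}$ to $\{i+n,j+n\}$, and conjugating by a braid realizing $\tau$ preserves the class since all such conjugates lie in one coset of $(B_\om)'$ (using $B_\om=P_\om(B_\om)'$). Hence on finite pure braids $\Phi_n$ preserves $[\,\cdot\,]_a$ by additivity. For a general pure $\om$-braid I would pass to restrictions, noting by Theorem~\ref{thm:reduction} that the equivalence class of $\Phi_n(f)$ is pinned down by the classes of the finite braids $\Phi_n(f)|_m$, and then match $[\Phi_n(f)]_a$ with $[f]_a$ by a limiting argument across $m$.

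The main obstacle is precisely this last step: the shift $x\mapsto x+n$ is an injection but \emph{not} a permutation of $\om$, so $\Phi_n$ is not induced by conjugation with any single element of $B_\om$ and one cannot invoke conjugation-invariance globally. I expect to handle this by choosing, for each $m$, a permutation $\tau_m\in\Si_\om$ that agrees with the shift on $\{0,\dots,m-1\}$ and parks the $n$ displaced strings far out in $\om$; conjugation by a braid realizing $\tau_m$ then carries the relevant initial segment of $f$ to that of $\Phi_n(f)$ up to a product of commutators, so the two agree in $[\,\cdot\,]_a$ on stage $m$. Letting $m\to\infty$ pushes all discrepancies onto strings of arbitrarily high index, where they vanish in the product topology by Theorem~\ref{thm:reduction}, yielding $[\Phi_n(f)]_a=[f]_a$. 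Since $\op{Im}(\Phi_n(f))\subset O$ needs no further work, the lemma follows once this class identity is secured.
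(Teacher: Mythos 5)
Your reduction to a basic neighborhood and the observation that it suffices to pin the first $n$ strings are exactly the paper's starting point, but everything after that hinges on the single identity $[\Phi_n(f)]_a=[f]_a$, and your argument for it breaks at the final limiting step. What the stage-$m$ conjugation argument actually yields is: for every $m$ there are $c_m\in (B_\om)'$ and $d_m\in K_m$ with $[\Phi_n(f)]=[f]\,c_m\,d_m$, where $K_m$ denotes the kernel of the restriction homomorphism $P_\om\to P_m$. To conclude $[\Phi_n(f)]_a=[f]_a$ you would need $\bigcap_m (B_\om)'K_m=(B_\om)'$, i.e.\ that the images $\ov{K_m}$ of $K_m$ in the abelianization intersect trivially. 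Theorem~\ref{thm:reduction} cannot give this: it says that a single braid whose every finite restriction is trivial is itself trivial --- a statement about braid equivalence, not about cosets of $(B_\om)'$ --- and there is no topology on $B_\om/(B_\om)'$ in which your ``discrepancies on high-index strings'' tend to zero. In fact the opposite holds: the very lemma you are proving (take $O$ with $\ep<1/2$; then the first $m$ strings of $g$ stay in disjoint disks, so $g|_m\sim\op{id}_m$ and $g\in K_m$), combined with Lemma 2.9, shows that $\ov{K_m}$ is the \emph{whole} abelianization for every $m$; so $\bigcap_m\ov{K_m}$ is everything, and it is trivial only if $B_\om=(B_\om)'$ --- precisely the question the paper states it cannot answer (see the discussion before Theorem 2.8 and Remark 2.12).

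Worse, your key identity is itself equivalent to that open question, so no soft argument can establish it. If $[\Phi_n(g)]_a=[g]_a$ held for all pure $\om$-braids $g$, consider the infinite concatenation $c=f\cdot\Phi_n(f)\cdot\Phi_n^2(f)\cdots$, with the $k$-th factor performed on $[1-2^{-k},1-2^{-k-1}]$; each string is eventually constant, so $c$ is a genuine pure $\om$-braid, and $c\sim f\cdot\Phi_n(c)$. Abelianizing and applying your identity to $c$ gives $[f]_a=[c]_a-[\Phi_n(c)]_a=0$ (additive notation) for every pure $f$, whence $B_\om=(B_\om)'$ by Lemma 2.9. So either the identity is false, or proving it settles the open problem; either way the proposal does not prove the lemma. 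The paper avoids this trap by never pushing the braiding off to infinity: it uses $B_{n+1}/(B_{n+1})'\simeq\Bbb Z$ to replace $f|_{n+1}$ by a pure $(n\!+\!1)$-braid $g'$ fixing the first $n$ strings, the correction $h'$ being an honest element of $(B_{n+1})'$ which extends by trivial strings to an element $h$ with $[h]\in(B_\om)'$, and then applies Artin's ambient-isotopy theorem to drag the entire braid $f\cdot h$ so that its first $n+1$ strings become $g'$; the class changes only by $[h]\in(B_\om)'$, and no limit is ever taken.
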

\begin{proof}
There exists $n\in \om$ such that $O$ depends on the $i$-th co-ordinates ($i\in n$). 
Reminding the proof of the fact $B_{n+1}/(B_{n+1})' \simeq \Bbb Z,$ we 
conclude the existence of a pure $(n\!\! +\!\! 1)$-braid $g'$ such that 
$[g']_a = [f|_{n+1}]_a$ and $g'(s)(i) = i$ for $i\in n$ and $s\in\Bbb I.$ 
Then, there exists an ($n$+1)-braid $h'$ such that $[h']\in (B_{n+1})'$ and 
$[g'] = [f|_{n+1}][h'].$ Since we may assume $|h'(s)(i)| \le n+1/2$ for 
$i\in n$ and $s\in\Bbb I,$ extend $h'$ to a pure  $\om$-braid $h$ so that 
$h(s)(i) = i$ for 
$n+1 \le i\in \om, s\in \Bbb I.$ 
Then, 
$[h]\in (B_\om)'$ and $(f\! \cdot \! h)|_{n+1} \sim g'.$ 
Take $H$ in Definition~\ref{dfn:equivalent} for $(f\cdot h)|_{n+1}$ and $g'$ and 
define $g$ by: $g(s)(i) = H((f\cdot h)(s)(i),s,1).$ 
We have gotten a pure $\om$-braid $g$ such that $g|_{n+1} = g'$ and 
$[g]=[f][h],$ which imply $g\in O$ and $[g]_a = [f]_a.$ 
\end{proof}

The next lemma is essentially included in \cite[Theorem 1.1]{E:union}. 
More precisely, what was necessary to prove it is that 
the one point union of 
cones $(CX,x)\vee (CY,y)$ satisfies the condition of the next lemma with 
the common point as base point and 
that the first integral 
singular homology group is 
an abelian group and 
a homomorphic image of the fundamental group. 
Therefore, we omit the proof. 
\begin{lmm}
Let $Y$ be a path-connected Hausdorff space and first countable at $y\in Y.$ 
Let $A$ be an abelian group which is a homomorphic image of $\pi _1(Y,y)$ and 
$h:\pi _1(Y,y)\to A$ be the homomorphism. 
Suppose that for any loop $f$ with base point 
$y$ there exist loops $f_n$ with base point $y$ $(n\in \om )$ 
such that $h([f_n]) = h([f])$ and $\op{Im}(f_n)$ converge to $y.$ 
Then, $A$ is complete mod-$U$. 
\end{lmm}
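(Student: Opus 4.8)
The plan is to produce the required limit $x$ as the value of $h$ on a single infinitely concatenated loop whose constituent loops have images shrinking to $y$, and then to read off the divisibility conditions from finite regroupings of that concatenation. First I would fix, by first countability, a decreasing neighbourhood basis $U_1\supseteq U_2\supseteq\cdots$ at $y$. Given $x_n$ $(0<n\in\om)$ with $n!\mid x_{n+1}-x_n$, write $x_{n+1}-x_n=n!\,b_n$. Since $h$ is onto, choose a loop $g_0$ at $y$ with $h([g_0])=x_1$ and loops $\beta_n$ with $h([\beta_n])=b_n$; applying the shrinking hypothesis to each $\beta_n$ produces $\ga_n$ with $h([\ga_n])=b_n$ and $\op{Im}(\ga_n)\subseteq U_n$. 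Put $g_n=\ga_n^{\,n!}$ (the $n!$-fold self-concatenation), so that $h([g_n])=n!\,b_n=x_{n+1}-x_n$ while $\op{Im}(g_n)\subseteq U_n$.

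Next I would set $f=g_0\cdot g_1\cdot g_2\cdots$, with $g_0$ on $[0,1/2]$ and $g_k$ on $[1-2^{-k},\,1-2^{-k-1}]$ for $k\ge1$ and $f(1)=y$. Then $f$ is a loop at $y$: given a neighbourhood $O$ of $y$, choosing $N$ with $U_N\subseteq O$ gives $f(t)\in\bigcup_{k\ge N}\op{Im}(g_k)\cup\{y\}\subseteq U_N\subseteq O$ for all $t\ge1-2^{-N}$, which yields continuity at $1$ (Hausdorffness forces the value there to be $y$). Put $x=h([f])$. For each $m\ge1$, regrouping the first $m$ factors gives the homotopy $f\sim(g_0\cdot g_1\cdots g_{m-1})\cdot f^{(m)}$ rel $\{0,1\}$, where $f^{(m)}=g_m\cdot g_{m+1}\cdots$ is again a loop at $y$; since this regrouping is a mere reparametrisation, applying $h$ and using $x_1+\sum_{k=1}^{m-1}(x_{k+1}-x_k)=x_m$ yields $x-x_m=h([f^{(m)}])$.

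The whole statement now reduces to the single assertion $h([f^{(m)}])\in m!A$. Here I would use $m!\mid k!$ for $k\ge m$ to write $g_k=\eta_k^{\,m!}$ with $\eta_k=\ga_k^{\,k!/m!}$ and $\op{Im}(\eta_k)\subseteq U_k$, so that $f^{(m)}=\prod_{k\ge m}\eta_k^{\,m!}$. The loop $M=\prod_{k\ge m}\eta_k$ is well defined (its images still shrink), and its finite $m!$-th power satisfies $h([M^{m!}])=m!\,h([M])\in m!A$. Since $f^{(m)}$ and $M^{m!}$ traverse each $\eta_k$ exactly $m!$ times, differing only in the order, it suffices to prove $h([f^{(m)}])=h([M^{m!}])$. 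As $A$ is abelian, $h$ factors through $H_1(Y,y)=\pi_1(Y,y)^{\mathrm{ab}}$, in which every finite rearrangement of a product is free; in particular the two sides already agree on every finite truncation $\eta_m^{\,m!}\cdots\eta_{m+j}^{\,m!}$ versus $(\eta_m\cdots\eta_{m+j})^{m!}$.

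The hard part, and the step I would isolate as a separate lemma to be proved first, is upgrading this finite agreement to the full infinite concatenation, i.e. showing that the defect loop $f^{(m)}\cdot(M^{m!})^-$ — a loop with image in $U_m$ all of whose contributions cancel — has trivial $h$-class. This is exactly the infinite-product regrouping in homology carried out for one-point unions of cones in \cite[Theorem 1.1]{E:union}; the point is to rerun that argument using only the abstract data at hand, namely that every loop is $h$-equivalent to loops whose images tend to $y$, that $h$ is an abelian quotient of $\pi_1$, and first countability and Hausdorffness to control the concatenations. Granting this regrouping lemma, $h([f^{(m)}])=h([M^{m!}])\in m!A$ for every $m$, so $x=h([f])$ satisfies $m!\mid x-x_m$ for all $m$, and $A$ is complete mod-$U$.
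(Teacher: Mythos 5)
Your first half is sound: the choice of shrinking loops $\ga_k$ with $h([\ga_k])=b_k$ and $\op{Im}(\ga_k)\subseteq U_k$, the continuity of the infinite concatenation $f$, and the telescoping identity $x-x_m=h([f^{(m)}])$ are all correct and are indeed the natural opening moves. But the step you flag as ``the hard part'' and defer to a regrouping lemma is not a deferrable technicality --- it is the entire content of the lemma, and the principle you want is not available. An infinite concatenation of loops is \emph{not} determined, even in the abelianization $\pi_1(Y,y)/\pi_1(Y,y)'$, by the multiset of its factors: only \emph{finite} rearrangements are free modulo commutators, and there is no limit procedure that promotes agreement on finite truncations to agreement of the infinite concatenations (this is precisely the wildness phenomenon at stake). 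A standard illustration: in the Hawaiian earring there are infinite concatenations of commutators, such as $\prod_{k}[a_{2k},a_{2k+1}]$, which do not lie in the commutator subgroup of $\pi_1$ even though every finite truncation obviously does; ``all contributions cancel under finite rearrangement'' implies nothing about the infinite loop. So your claim $h([f^{(m)}])=h([M^{m!}])$ does not follow from abelianness of $A$, and none of the hypotheses (surjectivity of $h$, existence of shrinking representatives) is used in your sketch to derive it; nor is such a rearrangement principle what \cite[Theorem 1.1]{E:union} proves --- that argument concerns concrete one-point unions of cones and does not pass through comparing two orderings of one infinite product. As it stands, the crux of the proof is missing.

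The gap is closed not by proving regrouping but by changing the construction so that no infinite rearrangement is ever needed: use \emph{nested} instead of linear concatenations. Keeping your $\ga_k$, define for each $m\ge 1$ a single loop $\de_m$ which traverses $\ga_m$ on the first half of $\Bbb I$ and then $m+1$ rescaled copies of $\de_{m+1}$; unwinding the recursion, $\de_m$ is a concatenation of the loops $\ga_k$ $(k\ge m$, each occurring $k!/m!$ times$)$ over a countable family of disjoint subintervals, and it is continuous because $\op{Im}(\ga_k)\subseteq U_k$ with $U_k$ decreasing, the value being $y$ off those intervals. Now $\de_m$ and $\ga_m\cdot\de_{m+1}^{\,m+1}$ differ only by a reparametrization of a \emph{finite} concatenation, so the identity $h([\de_m])=b_m+(m+1)\,h([\de_{m+1}])$ holds exactly in $A$. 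Induction gives $h([\de_1])=\sum_{k=1}^{m-1}k!\,b_k+m!\,h([\de_m])=(x_m-x_1)+m!\,h([\de_m])$ for every $m$, so $x:=x_1+h([\de_1])$ satisfies $m!\mid x-x_m$ for all $m$, as required. This nested-word device is the actual engine behind the argument the paper cites, and it is exactly what your linear concatenation plus unproved regrouping was trying to simulate.
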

{\it Proof of\/} Theorem 2.8.
By Lemma 2.9, the abelianization of $B_\om $ is a homomorphic image 
of $P_\om .$ 
Now, the theorem follows from 
Lemmas 2.10 and 2.11. 
\smallskip
\smallskip
\begin{rmk} We have not succeeded to 
prove the torsion-freeness of $B_X $ for uncountable $X.$  
As we remarked before Theorem 2.8, we don't know whether $B_\om = (B_\om )'$ 
or not. 
\end{rmk}
\section{A representation of $B_\om$ as an automorphism group \\
on the unrestricted free product}
\bigskip
As is well-known, E. Artin \cite{Artin:braid} represented $B_n$ 
as an automorphism group 
on free groups of $n$-generators. On the other hand, G. Higman 
\cite{Higman:unrestrict} 
introduced a notion \lq\lq Unrestricted free product\rq\rq . In this section, 
we represent $B_\om$ as an automorphism group on 
the unrestricted free product of finitely generated free groups. 
Let $\Bbb Z_i\; (i\in \om )$ be copies of the integer group $\Bbb Z$ and 
$p^m_n:\ast _{i\in m}\Bbb Z_i\to \ast _{i\in n}\Bbb Z_i$ the canonical 
projection for $n\le m,$ where 
$\ast _{i\in m}\Bbb Z_i$ is the free product of $\Bbb Z_i$'s. 
We regard $\ast _{i\in m}\Bbb Z_i$ as the trivial 
group $\{ e\}$ in case $m = 0$ as usual. 
The unrestricted free product $\Cal G_\om$ is the inverse limit 
$\lim_{\leftarrow }(\ast_{i\in n} \Bbb Z_i,  p^m_n: n\le m, m,n\in \om ).$ 
Let $p_E:\Cal G_\om \to \ast _{i\in E}\Bbb Z_i$ be the induced projection for 
a finite subset $E$ of $\om.$  In the following, let $\de _i$ be the 
generator of $\Bbb Z_i$ which corresponds to $1$ in $\Bbb Z.$ 
For a group $G,$ we denote the automorphism group of $G$ by $\op{Aut}(G).$ 
\begin{thm} Fix an inverse system 
$(\ast_{i\in n} \Bbb Z_i,  p^m_n: n\le m, m,n\in \om )$ 
for $\Cal G_\om .$ 
Let ($\dagger $) be a property of $a\in \op{Aut}(\Cal G_\om )$ 
as the following: 
\begin{eqnarray}
(\dagger )\; & \text{For any } \; m\in \om \text{ there exist a finite subset }
\; E\; \text{ of }\; \om , h\; \text{and }\; \si \\ 
& \text{such that}\; h :\ast _{i\in m}\Bbb Z_i\to \ast _{i\in E}\Bbb Z_i\text{ is an isomorphism }, \si : m\to E  \\
& \text{is a bijection}, p_E a = h p_m\; , 
h(\de _i) = U_i^{-1}\de _{\si (i)}U_i \text{ for some } U_i\; , \\
& h(\Pi _{i\in m}\de _i ) = \Pi _{i\in E}\de _i\; ,
 \text{where the products are taken under the} \\
& \text{natural ordering on } \om.
\end{eqnarray}
Then, $B_\om$ is naturally isomorphic to the subgroup 
of $\op{Aut}(\Cal G_\om )$ 
consisting of all automorphisms satisfying ($\dagger$). 
\end{thm}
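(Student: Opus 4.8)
The plan is to build the isomorphism from the classical Artin representation applied compatibly at every finite level. Recall that for a finite braid Artin \cite{Artin:braid} assigns an automorphism of the free group on the meridians of the starting points, that this assignment is a faithful homomorphism, and that its image is exactly the set of automorphisms in the normal form appearing in ($\dagger$) (each generator going to a conjugate of a generator, and the ordered product of all generators being preserved). First I would fix an $\om$-braid $f=(p_i:i\in\om)$ and examine each restriction $f|_m$, a path in $F_m$ from the starting configuration $m=\{0,\dots,m-1\}$ to the endpoint set $E_m=\{f(1)(i):i<m\}$. Reading $f|_m$ as a geometric braid on $m$ strings in $\Bbb C\times\Bbb I$ and applying Artin's construction yields an isomorphism $h^f_m:\ast_{i\in m}\Bbb Z_i\to\ast_{i\in E_m}\Bbb Z_i$ in the required normal form, with underlying bijection $\si^f_m=f(1)|_m:m\to E_m$. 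Since forgetting the strings indexed by $m'\sm m$ commutes with Artin's construction, one gets $p^{E_{m'}}_{E_m}\circ h^f_{m'}=h^f_m\circ p^{m'}_m$ for $m\le m'$, where $p^{E_{m'}}_{E_m}:\ast_{i\in E_{m'}}\Bbb Z_i\to\ast_{i\in E_m}\Bbb Z_i$ kills the generators outside $E_m$. Because $f(1)$ is a bijection of $\om$, the sets $E_m$ increase to $\om$, so the maps $h^f_m p_m$ cohere and determine a unique $\Phi([f])\in\op{Aut}(\Cal G_\om)$ with $p_{E_m}\Phi([f])=h^f_m p_m$ for all $m$, its inverse being induced by the reverse braid.

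Well-definedness, the homomorphism property, and membership in the $(\dagger)$-subgroup are then formal. If $f\sim g$ then $f(1)=g(1)$ and, by Theorem~\ref{thm:reduction}, $f|_m\sim g|_m$ for every $m$, so the faithful finite Artin representation gives $h^f_m=h^g_m$ and hence $\Phi([f])=\Phi([g])$; that $\Phi$ respects $[f][g]=[f\# g]$ follows level-by-level from the fact that Artin's assignment carries the braid product to composition of automorphisms (in the appropriate order), once the endpoint sets are tracked through $f\# g=f\cdot g^{f(1)}$. By construction each $\Phi([f])$ satisfies ($\dagger$) with $E=E^f_m$, $h=h^f_m$, $\si=\si^f_m$. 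For injectivity, suppose $\Phi([f])=\op{id}$. Comparing $p_{E_m}$ with $h^f_m p_m$ on the generators forces $E_m=m$ and $h^f_m=\op{id}$ for every $m$; in particular $\si^f_m=\op{id}$, so $f$ is pure, and faithfulness of the finite representation gives $f|_m\sim\op{id}_m$ for all $m$, whence $f\sim\op{id}_\om$ by Theorem~\ref{thm:reduction} and $[f]=e$.

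The substantial step is surjectivity. Given $a\in\op{Aut}(\Cal G_\om)$ satisfying ($\dagger$), I extract for each $m$ the data $E_m,h_m,\si_m$; coherence of the projections $p_{E_m}a$ forces $p^{E_{m'}}_{E_m}h_{m'}=h_m p^{m'}_m$ and $\si_{m'}|_m=\si_m$, so the $\si_m$ assemble into a permutation in $\Si_\om$ and the $h_m$ form a coherent tower of Artin normal-form isomorphisms. By the converse half of Artin's theorem each $h_m$ is realized by a braid $\be_m$ on $m$ strings from $m$ to $E_m$, and coherence of the tower makes the classes $[\be_m]$ compatible under forgetting strings. It then remains to produce a single $\om$-braid $f$ with $f|_m\sim\be_m$ for all $m$; once such an $f$ exists, $\Phi([f])$ and $a$ agree on every $p_{E_m}$ and hence coincide.

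This realization of a compatible tower of finite braid classes by one honest infinite braid is where I expect the real difficulty to lie: Theorem~\ref{thm:reduction} tells us that the class of an $\om$-braid is determined by its restrictions, but it does not by itself produce the braid. I would construct the strings $p_i$ inductively, at stage $m$ choosing a representative of $\be_m$ extending the portion already built on the first $m-1$ strings while leaving room to insert later strings without collisions, in the spirit of the dogleg construction of Proposition 2.1 and the telescoping homotopy of Lemma~\ref{lmm:spetial}. The crux is to control these choices so that the limiting family $(p_i:i\in\om)$ is a genuine point of $F_\om$ for every $t$, i.e.\ so that pairwise distinctness survives passage to the limit; after that, matching the resulting $f$ with $a$ is again a level-by-level check through the faithful finite representation.
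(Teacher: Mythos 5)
Your overall architecture is the same as the paper's: apply Artin's finite representation to each restriction $f|_m$, check coherence with the projections, pass to the inverse limit, and for surjectivity realize the coherent tower of finite braids extracted from $(\dagger)$ by a single $\om$-braid. But your proposal stops exactly at the step that carries the real content, and the sketch you offer in its place would not obviously work. The paper isolates this step as Lemma 3.3: given finite braids $f_m$ with $f_m|_n\sim f_n$ for $n\le m$ and $\bigcup_{m}\{f_m(1)(i):i\in m\}=\om$, there exists $f\in A_\om$ with $f|_m\sim f_m$ for all $m$. Its proof is not an inductive choice of representatives ``leaving room'' for later strings: it invokes Artin's strong-equivalence theorem \cite[Theorem 6]{Artin:braid} to produce, for each $m$, a level-preserving family of ambient homeomorphisms $H^m_{s,1}$ of $\Bbb C$ carrying the first $m$ strings of $f_{m+1}$ exactly onto the strings of $f_m$, and then defines the $m$-th string of $f$ by pushing the last string of $f_{m+1}$ through the composition $H^1_{s,1}\circ\cdots\circ H^m_{s,1}$. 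Because each stage is a homeomorphism of the plane applied to all strings simultaneously, pairwise distinctness of the limiting family at every time $s$ is automatic, and continuity is immediate; these are precisely the two points you flag as ``the crux'' and leave unresolved. Without this device (or an equivalent isotopy-extension argument), an induction that chooses representatives string by string has no mechanism forcing the infinitely many separate choices to remain disjoint in the limit, so the surjectivity half of the theorem is not established.

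A second, smaller gap: in the surjectivity argument you assert that the bijections $\si_m:m\to E_m$ ``assemble into a permutation in $\Si_\om$.'' Coherence only yields an injection $\om\to\om$; one must prove $\bigcup_m E_m=\om$, i.e.\ that every $j\in\om$ lies in some $E_m$, since otherwise the limiting path ends at a non-surjective injection and is not an $\om$-braid at all. The paper proves this via Lemma 3.2 (every homomorphism from $\Cal G_\om$ to a free group factors through a finite subproduct): taking $g$ with $a(g)=\de_j$, a factorization $p_{\{j\}}a=\ov{h}p_m$ forces $p_m(g)\neq e$, hence $p_{E_m}(\de_j)=h_mp_m(g)\neq e$ and $j\in E_m$. (One can alternatively argue that if $j\notin E_m$ for all $m$ then $h_mp_m(g)=p_{E_m}(\de_j)=e$ for all $m$, forcing $g=e$, a contradiction.) Either way an argument is required, and your proposal passes over the point silently.
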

To show the theorem, some lemmas are necessary. 
\begin{lmm} \cite[Theorem 1]{Scott:group} 
Let $h:\Cal G_\om \to \Cal F$ be a homomorphism, where 
$\Cal F$ is a free group. 
Then, there exist $m\in \om $ and a homomorphism 
$\ov{h}:\ast _{i\in m}\Bbb Z_i \to \Cal F$ such that $h = \ov{h}p_m .$ 
\end{lmm}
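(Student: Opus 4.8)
The plan is to recast the desired factorization as a statement about kernels and then to obstruct non-factorization by producing an infinite product inside $\Cal G_\om$ whose image in $\Cal F$ cannot exist. Put $K_m=\op{Ker}(p_m)$. The projection $p_m$ is a (split) surjection with kernel $K_m$, so $\ast_{i\in m}\Bbb Z_i\cong \Cal G_\om/K_m$ and, by the universal property of the quotient, $h$ factors as $h=\ov h\,p_m$ for some $\ov h$ precisely when $K_m\subseteq\op{Ker}(h)$. Since the $K_m$ form a descending chain, it suffices to prove that $K_m\subseteq\op{Ker}(h)$ for some $m$; equivalently, that $h$ cannot be nontrivial on $K_m$ for every $m$. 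The first step is the elementary observation that an element $y\in K_m$ is a compatible family whose projections $p_n(y)$ are trivial for all $n\le m$, so that its ``support'' lies beyond coordinate $m$.

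The key structural tool I would isolate next is an infinite-product lemma. If $(y_k)_{k\in\om}$ satisfies $y_k\in K_{m_k}$ for some strictly increasing $m_k$, then under each $p_n$ only the finitely many factors with $m_k<n$ survive, so the formal product $\prod_k y_k$ is a well-defined element $w$ of $\Cal G_\om$; moreover, for every $n$ the equation $w=(y_0\cdots y_{n-1})\cdot z_n$ with $z_n=\prod_{k\ge n}y_k$ holds honestly in $\Cal G_\om$, and the tails are nested via $z_n=y_n z_{n+1}$. Applying the homomorphism yields, for every $n$, the factorization $h(w)=h(y_0)\cdots h(y_{n-1})\,h(z_n)$ in the free group, together with $h(z_n)=h(y_n)h(z_{n+1})$. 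The same construction applies to $w_S=\prod_{k\in S}y_k$ for every $S\subseteq\om$, producing continuum-many elements on which $h$ is controlled by honest finite factorizations.

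Assuming $h$ does not factor through any $p_m$, I would then pick, for each $m$, an element of $K_m$ not killed by $h$, and extract a sequence $(y_k)$ with strictly increasing supports and $c_k:=h(y_k)\ne e$ for all $k$. The aim is a contradiction from the nested identities $h(w)=c_0\cdots c_{n-1}h(z_n)$ inside $\Cal F$. The main obstacle is precisely here: these nested identities alone do \emph{not} force a contradiction, as the abelian model $\Cal F=\Bbb Z$ with $c_k=(-1)^k$ and $h(w)=0$ already shows (set $h(z_n)=-\sum_{k<n}(-1)^k$), so the argument must use the full family $\{w_S\}$ together with the reduced-word combinatorics of the free group. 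My intended mechanism is to exploit the freeness inside $\Cal G_\om$ to replace each $y_k$ by a conjugate $g_k y_k g_k^{-1}\in K_{m_k}$, with $g_k$ chosen so that the images $h(g_k)c_kh(g_k)^{-1}$ sit in ``independent'' positions of $\Cal F$; then the partial products acquire reduced length tending to infinity with no tail able to absorb it, contradicting that $h(w)$ is a single finite reduced word.

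The delicate point to control, and what I expect to be genuinely hard, is guaranteeing enough room in $\op{Im}(h)$ to perform these conjugations: when the image is too small (for instance cyclic) the conjugation route collapses, since all $c_k$ become powers of one element and, as the abelian caution shows, nested factorizations cannot be contradicted directly. In that regime one must instead observe that $h$ factors through an abelian quotient and invoke the classical Specker phenomenon for $\Bbb Z^{\om}$. Merging these two regimes into one uniform cancellation argument is the crux; it is exactly the noncommutative Specker phenomenon, i.e.\ the n-slenderness of free groups, that underlies Scott's theorem.
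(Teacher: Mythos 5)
Your reduction and your machinery are sound as far as they go: since $p_m$ is a split surjection, $h=\ov h\,p_m$ for some $\ov h$ if and only if $\op{Ker}(p_m)\subseteq\op{Ker}(h)$, and the infinite products $w_S=\prod_{k\in S}y_k$ with $y_k\in\op{Ker}(p_{m_k})$, $m_k$ strictly increasing, are indeed well defined in $\Cal G_\om$ and satisfy the tail identities you state. But everything after that is a declaration of intent rather than a proof, and you concede as much (``the main obstacle is precisely here'', ``the crux''). Two concrete failures. First, the conjugation mechanism cannot be implemented: the elements $h(g_k)$ are confined to $\op{Im}(h)$ and are otherwise dictated by $h$, so there is no way to ``choose'' them so that the conjugates $h(g_k)c_kh(g_k)^{-1}$ occupy ``independent positions'' of $\Cal F$; nothing in the proposal makes that phrase precise or shows such $g_k$ exist, and in general they do not. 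Second, the fallback for small image is a non sequitur: if $\op{Im}(h)$ is cyclic, then $h$ factors through the abelianization of $\Cal G_\om$, \emph{not} through $\Bbb Z^\om$. The natural surjection $\Cal G_\om\to\Bbb Z^\om$ (the inverse limit of the stagewise abelianizations) has a kernel which is strictly larger than the commutator subgroup, and showing that $h$ kills it is a factorization problem of exactly the kind you are trying to solve; so Specker's theorem cannot be invoked. Your closing sentence, that the crux is ``exactly the noncommutative Specker phenomenon, i.e.\ the n-slenderness of free groups,'' is an admission of circularity: that phenomenon \emph{is} the statement of the lemma.

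For comparison, the paper does not prove this lemma at all: it quotes it from the literature (cited as Theorem 1 of \cite{Scott:group}; in substance this is Higman's theorem on unrestricted free products, cf.\ \cite{Higman:unrestrict}). So the honest options were either to cite the result, as the paper does, or to reconstruct the classical argument, whose entire content is the cancellation step you left open: from a sequence $y_k\in\op{Ker}(p_{m_k})$ with $h(y_k)\neq e$ one must derive a contradiction by playing the continuum-many elements $w_S$ (or suitably nested conjugates) against the fact that each image in $\Cal F$ is a single finite reduced word. That combinatorial argument in the free group is the theorem; your proposal supplies only the scaffolding around it.
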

\begin{lmm}
Suppose that $f_m:\Bbb I \to F_m\; (m\in \om _+)$ are satisfying 
$f_m(0) = \op{id}_m,$ $\{ f_m(1)(i):i\in m \} \subset \om$ and $f_m|_n \sim f_n$ for $n\le m, m,n\in \om _+.$ 
If $\bigcup _{m\in \om _+} \{ f_m(1)(i):i\in m \} = \om ,$ 
then there exists $f\in A_\om $ such that $f|_m \sim f_m$ for every 
$m\in \om _+.$ 
\end{lmm}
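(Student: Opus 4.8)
The plan is to replace the homotopy-compatible system $(f_m)$ by a strictly compatible one and then assemble the desired $\om$-braid coordinatewise. First I would construct, by induction on $m\in\om_+$, a family of $m$-braids $g_m$ satisfying $g_m\sim f_m$, $g_m(1)=f_m(1)$, and $g_m|_n=g_n$ (genuine equality, not merely $\sim$) for all $n\le m$. Put $g_1=f_1$. Given $g_n$ with $g_n\sim f_n$, note that $f_{n+1}|_n\sim f_n\sim g_n$, so $f_{n+1}|_n$ and $g_n$ are equivalent $n$-braids. Proposition 1.3 then furnishes homotopies $H_i\;(i\in n)$ between the strings of $f_{n+1}|_n$ and those of $g_n$, and by Artin's theorem \cite[Theorem 6]{Artin:braid} these extend to an ambient isotopy $\tilde H:\Bbb C\times\Bbb I\times\Bbb I\to\Bbb C$ as in Definition~\ref{dfn:equivalent}, with $\tilde H(f_{n+1}(s)(i),s,1)=g_n(s)(i)$ for $i\in n$ and $\tilde H_{s,t}=\op{id}$ outside a large disk. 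Define $g_{n+1}(s)(i)=\tilde H(f_{n+1}(s)(i),s,1)$ for $i\in n+1$.

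Because each $\tilde H_{s,t}$ is a homeomorphism of $\Bbb C$ fixing $\al$ for $s\in\{0,1\}$ and at $t=0$, the assignment $t\mapsto\big(s\mapsto\tilde H(f_{n+1}(s)(i),s,t)\big)$ is a homotopy rel $\{0,1\}$ inside $F_{n+1}$ from $f_{n+1}$ to $g_{n+1}$; hence $g_{n+1}\sim f_{n+1}$, while $g_{n+1}(0)=\op{id}_{n+1}$ and $g_{n+1}(1)=f_{n+1}(1)$. For $i\in n$ the defining formula gives $g_{n+1}(s)(i)=g_n(s)(i)$, so $g_{n+1}|_n=g_n$, and therefore $g_{n+1}|_k=g_k$ for all $k\le n$. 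This completes the induction.

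Now for each $i\in\om$ set $p_i(s)=g_m(s)(i)$ for any $m>i$; this is independent of $m$ by strict compatibility, so $f:=(p_i:i\in\om)$ is well defined. Each $p_i$ is continuous with $p_i(0)=i$, so $f:\Bbb I\to\Bbb C^\om$ is continuous for the product topology; since within each $g_m$ the strings never collide, $p_i(s)\neq p_j(s)$ for $i\neq j$, whence $f$ takes values in $F_\om$. The endpoint map $f(1)(i)=g_m(1)(i)=f_m(1)(i)$ is injective (distinct strings of $g_m$ end at distinct points) and, by the surjectivity hypothesis $\bigcup_m\{f_m(1)(i):i\in m\}=\om$, onto $\om$; thus $f(1)\in\Si_\om$ and $f\in A_\om$. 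Finally $f|_m=g_m\sim f_m$ for every $m$, as required.

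The main obstacle is the inductive step: passing from homotopy-compatibility to strict equality of restrictions. The hypothesis only gives $f_{n+1}|_n\sim f_n$, so one cannot glue the $f_m$ directly; the essential tool is Artin's strong-equivalence theorem, which promotes a homotopy of the first $n$ strings to an ambient isotopy of the plane that carries the $(n\! +\! 1)$-st string along coherently. Care is needed to check that this isotopy fixes the braid endpoints $s\in\{0,1\}$ and the initial time $t=0$, so that equivalence is preserved and the resulting $g_{n+1}$ is a genuine $(n\! +\! 1)$-braid with $g_{n+1}|_n=g_n$; once strict compatibility is secured, the product-topology limit in the last paragraph is routine.
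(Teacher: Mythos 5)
Your proof is correct and follows essentially the same route as the paper's: both use Artin's Theorem 6 to promote the homotopies $f_{m+1}|_m\sim f_m$ to ambient isotopies of $\Bbb C$, use these to replace the $f_m$ by a strictly compatible system, and then assemble the $\om$-braid coordinatewise in the product topology. The only difference is presentational --- the paper pushes each new string through the explicit composite $H^1_{s,1}\circ\cdots\circ H^m_{s,1}$ of the isotopies, whereas you re-invoke Artin's theorem at each inductive step against the already normalized braid $g_n$; the two constructions coincide.
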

\begin{proof} By \cite[Theorem 6]{Artin:braid}, for each $m\in \om _+$ there exists 
$H^m:\Bbb C\times \Bbb I\times \Bbb I \to \Bbb C$ such that 
\begin{eqnarray*}
(1) & H^m_{s,t}:\Bbb C\to \Bbb C \text{ is a homeomorphism, where }
H^m_{s,t}(\al )=H^m(\al ,s,t) ; \\ 
(2) & H^m(\al ,s,0)= \al \text{ and }H^m(\al ,0,t)=H^m(\al ,1,t)=\al ; \\
(3) & H^m(f_{m+1}(s)(i),s,1)=f_m(s)(i) \text{ for }i\in m.
\end{eqnarray*}
Next, define $f^k_m :\Bbb I\to \Bbb C$ as follows: 
$f^0_m(s) = f_m(s)(m-1)$ and $f^k_m(s) = H^{m-k}(f^{k-1}_m(s),s,1)$ 
for $0 < k\in m.$ 
Finally, let $f(s)(m) = f^m_{m+1}(s)$ for $m\in \om .$ 
It is easy to check that $f(s) \in F_\om ,$ $f(1) = \op{id}_\om$ 
and $f:\Bbb I\to F_\om $ is continuous. 
By definition, $f|_m \sim f_m$ for every $m\in \om _+.$  
Now, $\bigcup _{m\in \om _+} \{ f_m(1)(i):i\in m \} = \om $ implies 
$f(1) \in \Si _\om$ and we have shown the lemma. 
\end{proof}

{\it Proof of} Theorem 3.1. 
For $f\in A_\om ,$ let $\si = f(1)$ and $n\in \om _+.$ 
The restriction $f|_n$ does not always belong to $B_n,$ but is still a braid 
in the sense of \cite{Artin:braid}. Therefore, 
$f|_n$ induces an isomorphism 
$e_n^f:\ast _{i\in n}\Bbb Z_i\to \ast _{i\in n}\Bbb Z_{\si (i)}.$ 
For $n\in \om ,$ denote the set $\{ \si (i):i\in n\}$ by $\si [n].$ 
Then, the following diagram 3.4 commutes, i.e. $e_n^fp_n^m = p_{\si [n]}^{\si [m]}e_m^f$ 
for $n\le m\; (m,n\in \om ).$ 

$${\rm{Diagram 3.4}}\quad 
\xymatrix{
\ast _{i\in m}\Bbb Z_i  \ar[d]_{p_n^m} \ar[r]^{{e_m^f}} & 
\ast _{i\in \si [m]}\Bbb Z_i \ar[d]^{p_{\si [n]}^{\si [m]}}\\
\ast _{i\in n}\Bbb Z_i
\ar[r]_{e_n^f} & \ast _{i\in \si [n]}\Bbb Z_i}
$$

Since $\si [n] \subset \si [m] $
for $n\le m\; (m,n\in \om )$ and $\om = \bigcup _{n\in \om }\si [n],$ 
$\Cal G_\om = \lim_{\leftarrow }
(\ast_{i\in \si [n]} \Bbb Z_i,  p^{\si [m]}_{\si [n]}: n\le m, m,n\in \om ).$ Therefore, $f$ induces an automorphism $a_f$ on $\Cal G_\om ,$ 
which satisfies the property ($\dagger$) by \cite[Theorem 15]{Artin:braid}. 
It is easy to see that $a_f$ only depends on $[f]$ by Theorem 2.4 and 
the map $[f] \to a_f$ is a homomorphism from $B_\om$ to 
$\op{Aut}(\Cal G_\om ).$ The injectivity of this homomorphism follows again 
from Theorem 2.4. 

To show the surjectivity of this homomorphism, let $a\in \op{Aut}(\Cal G_\om )$ satisfy the property ($\dagger$). 
Then, there exist $h_m$ and a finite subset $E_m$ of $\om $ which satisfy the properties in ($\dagger$) for each $m\in \om .$ 
First, we show $E_n \subset E_m$ for $n\le m.$ 
For $i\in E_n,$ there is $g_i\in \Cal G_\om$ such that $a(g_i)=\de _i,$ 
where we identify $\Bbb Z_i$ with the corresponding subgroup 
in $\Cal G_\om .$  
Then, $\de _i = p_{E_n}a(g_i) = h_np_n(g_i)$ and hence $p_n(g_i)\neq e,$ which implies $p_m(g_i)\neq e.$ 
Therefore, $p_{E_m}(\de _i) = p_{E_m}a(g_i) = h_mp_m(g_i) \neq e,$ which implies $i\in E_m.$ 
For $i\in m,$ $p_{E_m}a(\de _i) = h_mp_m(\de_i) = h_m(\de _i)$ and hence 
$p_{E_n}^{E_m}h_m(\de _i) = p_{E_n}^{E_m}p_{E_m}a(\de _i) = 
p_{E_n}a(\de _i) = h_np_n(\de _i) = h_np_n^mp_m(\de _i) = h_np_n^m(\de _i).$ 
Therefore, $p_{E_n}^{E_m}h_m = h_np_n^m,$ i.e. the following diagram commutes. 
$$\rm{Diagram} 3.5\quad 
\xymatrix{
\ast _{i\in m}\Bbb Z_i  \ar[d]_{p_n^m} \ar[r]^{h_m} & 
\ast _{i\in E_m}\Bbb Z_i \ar[d]^{p_{E_n}^{E_m}}\\
\ast _{i\in n}\Bbb Z_i
\ar[r]_{h_n} & \ast _{i\in E_n}\Bbb Z_i}
$$
Since $h_m$ satisfies the conditions in ($\dagger$) and Diagram 3.5, by 
\cite[Theorem 16]{Artin:braid} 
there exists a path $f_m:\Bbb I\to F_m$ such that $f_m(0) = \op{id}_m, $ 
$\{ f_m(1)(i):i\in m\} = E_m,$
$e^m_{f_m} = h_m$ and $f_m|n\sim f_n$ for $n\le m, m,n\in \om _+.$  
If we can show that $f_m \; (m\in \om _+)$ satisfy the conditions of 
Lemma 3.3, we get $f\in A_\om $ so that $a = a_f.$ Therefore, it suffices 
to show $\bigcup _{m\in \om _+}E_m = \om.$ 
For any $j\in \om ,$ there exist $m\in \om _+$ and 
$\ov{h}:\ast_{i\in m}\Bbb Z_i\to \Bbb Z_j$ with $p_{\{ j\}}a = \ov{h}p_m$ 
by Lemma 3.2. Take $g$ so that $a(g) = \de _j.$ 
Then, $\ov{h}p_m(g) = p_{\{ j\}}a(g) \neq e$ and hence $p_m(g)\neq e.$ 
Since $h_m$ is an isomorphism, 
$p_{E_m}(\de _j) = p_{E_m}a(g) = h_mp_m(g) \neq e,$ which implies 
$j\in E_m.$ 
\smallskip
\smallskip
\begin{rmk}
Instead of $\Cal G_\om ,$ there is another candidate to represent $B_\om $ 
by its 
automorphism group. It is the fundamental group of the so-called Hawaiian 
ear ring, which is a subgroup of $\Cal G_\om $ and studied in \cite{E:free}. 
But, a natural $\om $-braid has no naturally corresponding 
automorphism on the fundamental group of the Hawaiian ear ring. 
For instance, think of a pure $\om$-braid such that the first string goes 
straight, but the others go around the first string. 
\end{rmk}

\medskip
{\large Supplementary remark}
In the case of a finite subset $X$ of $\Bbb C,$ regarding $\Bbb C^X$ as the mapping space 
from the discrete space $X$ to $\Bbb C,$ we can see natural correspondences among 
 $f\in A_X,$ a continuous maps $f':X\times \Bbb I\to \Bbb C$  
$( f'(x,t) = f(t)(x))$ and a level preserving embedding 
$f'':X\times \Bbb I \to \Bbb C\times \Bbb I$  $( f''(x,t) = (f(t)(x),t))$ 
whose image $f''(X\times \Bbb I) \subset \Bbb C\times \Bbb I$ coincides 
with a finite braid in the intuitive sense. For $f,g\in A_X,$ a homotopy 
between $f$ and $g$ in $F_X$ relative to $\{ 0,1\}$ corresponds to a level-
preserving (ambient, in fact) isotopy between 
the two embeddings $f''$ and $g''$ keeping $X\times \{ 0,1\}$ fixed. 
In case $X$ is infinite, 
$f''$ may fail to be an embedding and is just a level preserving continuous injective map in general. And $f\sim g$ if and only if there exists a level preserving homotopy $H_t:X\times \Bbb I \to \Bbb C\times \Bbb I\; (t\in \Bbb I)$ such that $H_0= f'',$ $H_1 = g''$ and $H_t$ is injective. This is our basic view point to infinitary braid groups $B_X$ in this paper. 

\providecommand{\bysame}{\leavevmode\hbox to3em{\hrulefill}\thinspace}

\end{document}